\numberwithin{equation}{section}
\newtheorem{theorem}{Theorem}[section]
\newtheorem{lemma}[theorem]{Lemma}
\newtheorem{proposition}[theorem]{Proposition}
\newtheorem{corollary}[theorem]{Corollary}
\newtheorem{condition}[theorem]{Condition}
\theoremstyle{definition}
\newtheorem{definition}[theorem]{Definition}
\newtheorem{question}[theorem]{Question}
\begin{document}

\title[All $3$-manifold groups are Grothendieck rigid]{All finitely generated $3$-manifold groups are Grothendieck rigid}

\author{Hongbin Sun}
\address{Department of Mathematics, Rutgers University - New Brunswick, Hill Center, Busch Campus, Piscataway, NJ 08854, USA}
\email{hongbin.sun@rutgers.edu}


\subjclass[2010]{57M05, 20E18, 20E26}
\thanks{The author is partially supported by Simons Collaboration Grant 615229.}
\keywords{3-manifolds, fundamental groups, profinite completions, subgroup separability}

\date{\today}
\begin{abstract}
In this paper, we prove that all finitely generated $3$-manifold groups are Grothendieck rigid. More precisely, for any finitely generated $3$-manifold group $G$ and any finitely generated proper subgroup $H< G$, we prove that the inclusion induced homomorphism $\widehat{i}:\widehat{H}\to \widehat{G}$ on profinite completions is not an isomorphism.
\end{abstract}

\maketitle
\vspace{-.5cm}
\section{Introduction}

For a group $G$, its profinite completion is the inverse limit of the direct system of its finite quotients (see Section \ref{preliminary} for definition), and is denoted by $\widehat{G}$. There is always a natural homomorphism $G\to \widehat{G}$, and it is injective if and only if $G$ is residually finite. 

For a group homomorphism $u:H\to G$, it induces a homomorphism $\widehat{u}:\widehat{H}\to \widehat{G}$ on profinite completions (see Section \ref{preliminary} for definition). During his study of linear representations of groups, Grothendieck asked the following question in \cite{Gro}.

\begin{question}\label{rigidity}
Let $u:H\to G$ be a homomorphism of finitely presented residually finite groups, such that $\widehat{u}:\widehat{H}\to \widehat{G}$ is an isomorphism. Is $u$ an isomorphism?
\end{question}

For Question \ref{rigidity}, it suffices to consider the case that $u$ is injective, since any nontrivial element in the kernel of $u$ gives a nontrivial element in the kernel of $\widehat{u}$ (since $H$ is residually finite). So we can assume that $H$ is a subgroup of $G$, and $u$ is the inclusion homomorphism. We denote the inclusion homomorphism by $i:H\to G$, or may suppress $i$ when it causes no confusion. Similarly, we use $\widehat{H}\to \widehat{G}$ to denote the inclusion induced homomorphism on profinite completions, when it causes no confusion.

Now we review some terminologies introduced by Long and Reid in \cite{LR}. We assume all groups are finitely generated and residually finite unless otherwise stated. Let $G$ be a group and let $H< G$ be a subgroup. We say that $(G,H)$ is a {\it Grothendieck pair} if $H$ is a proper subgroup of $G$, and the inclusion induced homomorphism $\widehat{H}\to \widehat{G}$ on profinite completions is an isomorphism, i.e. it provides a negative answer to Question \ref{rigidity} if both $G$ and $H$ are finitely presented. Moreover, we say that $G$ is {\it Grothendieck rigid}, if for any finitely generated proper subgroup $H< G$, $(G,H)$ is not a Grothendieck pair. In other words, $G$ is Grothendieck rigid if for any finitely generated proper subgroup $H$, the inclusion induced homomorphism $\widehat{H}\to \widehat{G}$ is not an isomorphism.

In \cite{PT}, Platonov and Tavgen' constructed Grothendieck pairs $(G,H)$ consisting of finitely generated (but infinitely presented) residually finite groups, thus gave a partial negative answer to Question \ref{rigidity}. Then Bridson and Grunewald generalized the work in \cite{PT} to construct Grothendieck pairs $(G,H)$ consisting of finitely presented residually finite groups, thus answered Question \ref{rigidity} negatively.

\bigskip

Now we restrict to the category of finitely generated $3$-manifold groups and their finitely generated subgroups. Note that all these groups are automatically finitely presented (by \cite{Sco1}) and residually finite (by \cite{Hem} and the geometrization). 

In \cite{LR}, Long and Reid gave the first result on Grothendieck rigidity of $3$-manifold groups. They proved that groups of all closed geometric $3$-manifolds and groups of all finite volume hyperbolic $3$-manifold groups are Grothendieck rigid. Moreover, in \cite{BF}, Boileau and Friedl proved that groups of all  compact, connected, orientable,  irreducible $3$-manifolds with empty or tori boundary are Grothendieck rigid.

In this paper, we generalize the results in \cite{LR} and \cite{BF} to prove that all finitely generated $3$-manifold groups are Grothendieck rigid.

\begin{theorem}\label{main}
Let $M$ be a $3$-manifold with finitely generated fundamental group $G=\pi_1(M)$. Then for any finitely generated proper subgroup $H< G$, the inclusion induced homomorphism $\widehat{H}\to\widehat{G}$ on profinite completions is not an isomorphism. In other words, $\pi_1(M)$ is Grothendieck rigid.
\end{theorem}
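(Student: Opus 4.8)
The plan is to combine two ideas: a \emph{separability obstruction}, which disposes of every subgroup that is closed in the profinite topology of $G$, and a \emph{Bass--Serre/profinite tree} analysis, which is forced upon us precisely by the finitely generated subgroups that fail to be separable (these genuinely occur, since graph manifold groups and free products of nontrivial groups need not be LERF). First I would record the reductions. Since $G=\pi_1(M)$ is finitely generated, Scott's compact core theorem (\cite{Sco1}) lets me assume $M$ is compact; passing to the orientation double cover together with the transfer lemma below lets me assume $M$ is orientable; and the prime decomposition writes $G$ as a free product of copies of $\mathbb Z$ (from $S^1\times S^2$ summands) and fundamental groups of compact orientable irreducible pieces.

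The transfer lemma is elementary but essential: if $G_1\le G$ has finite index and $G_1$ is Grothendieck rigid, then so is $G$. Given a finitely generated proper $H<G$, if $[G:H]<\infty$ then the image of $\widehat H\to\widehat G$ is open of index $[G:H]>1$, hence not surjective, and we are done; if $[G:H]=\infty$, set $H_1=H\cap G_1$, which is finitely generated of infinite index, hence proper, in $G_1$. Assuming $\widehat H\to\widehat G$ were an isomorphism $\phi$, the open subgroup $\phi^{-1}(\overline{G_1})$ of $\widehat H$ meets the dense copy of $H$ exactly in $H_1$, so it equals $\widehat{H_1}$ and $\phi$ restricts to an inclusion-induced isomorphism $\widehat{H_1}\to\widehat{G_1}$, contradicting rigidity of $G_1$. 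This reduces the theorem to the orientable case and, via the prime decomposition, organizes the remaining work into a free-product step and an irreducible step.

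Next I would isolate the separability obstruction: if $H<G$ is proper and separable (closed in the profinite topology of $G$), then the closure of $H$ in $\widehat G$ meets $G$ in $H\ne G$, so $H$ is not dense and $\widehat H\to\widehat G$ is not surjective. Consequently Grothendieck rigidity is automatic for LERF groups, which already covers the geometric pieces: closed geometric and finite-volume hyperbolic pieces via Long--Reid \cite{LR}, and more generally the separable subgroups of the irreducible pieces, using that hyperbolic and Seifert fibered $3$-manifold groups are LERF (Agol, Wise, Scott) and that such groups are residually finite by \cite{Hem}. Since Boileau--Friedl \cite{BF} handle the full irreducible case with empty or toroidal boundary, after the reductions the outstanding difficulties are the irreducible pieces with higher-genus boundary and the assembly across the prime decomposition.

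The heart of the argument, and the step I expect to be the main obstacle, is the treatment of the finitely generated subgroups $H$ that are \emph{not} separable, for which the density obstruction is unavailable. Here I would induct on the complexity of the prime-plus-JSJ decomposition using the action of $G$ on the associated Bass--Serre tree $T$ dual to the separating spheres and the JSJ tori. A finitely generated $H$ either fixes a vertex of $T$, whence it is conjugate into a single piece and the irreducible/geometric cases apply, or it acts with a minimal invariant subtree and so inherits a nontrivial graph-of-groups splitting. Assuming $\widehat H\cong\widehat G$, I would push this splitting into the profinite completions: using that $3$-manifold groups are good in the sense of Serre and that their sphere and JSJ decompositions are \emph{efficient} over the profinite completion (the profinite Bass--Serre theory of Wilton--Zalesskii), $\widehat G$ acts on a profinite tree whose vertex and edge stabilizers are the closures of the piece groups and gluing surface groups. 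The isomorphism would then have to carry the induced $H$-tree compatibly onto this profinite tree, forcing $H$ to surject onto the relevant piece groups and realize the full edge structure; Kurosh's theorem in the free-product case and the acylindricity of the JSJ action let me convert this into the statement that $H$ already exhausts a piece or the whole group, contradicting that $H$ is proper of infinite index. Making efficiency and compatibility precise simultaneously in the non-separable, higher-genus-boundary, and free-product settings is the crux; everything else is reduction and bookkeeping.
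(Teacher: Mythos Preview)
Your reductions (Scott core, orientation double cover, prime decomposition) and your separability obstruction match the paper exactly, and your finite-index transfer lemma is the paper's Lemma~\ref{finiteindex}. The divergence, and the gap, is in the nonseparable case.

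Your plan there is to use that both $\widehat G$ and $\widehat H$ act on profinite Bass--Serre trees (from the JSJ/prime splittings of $G$ and from the induced splitting of $H$, both made efficient), and then to argue that an isomorphism $\widehat H\cong\widehat G$ ``would have to carry the induced $H$-tree compatibly onto this profinite tree,'' forcing $H$ to exhaust piece groups. But you give no mechanism for this compatibility, and none is available in general: an abstract profinite isomorphism has no reason to intertwine two tree actions, and the Wilton--Zalesskii type rigidity results compare JSJ splittings of two \emph{$3$-manifold groups}, not a $3$-manifold group with an arbitrary finitely generated subgroup whose induced graph-of-groups may have completely different vertex groups and underlying graph. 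Acylindricity and Kurosh control how $H$ sits inside $G$ abstractly, but they do not tell you that $\widehat H\to\widehat G$ respects any tree. Your last sentence concedes this is the crux, but nothing in the proposal fills it.

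The paper's route avoids this compatibility problem entirely by producing an \emph{intrinsic} numerical invariant that any isomorphism must preserve. Using the characterization of nonseparable subgroups of $3$-manifold groups from \cite{Sun} (that the dual graph of the almost fiber surface contains a cycle), the paper locates a vertex group $H^v<H$ and a nonabelian free subgroup $H_0\le H^v$ of index at most $2$ with two properties: $N_H(H_0)=H^v$, so $[N_H(H_0):H_0]\le 2$; and $[N_{G}(H_0):H_0]=\infty$ (because $H_0$ is a virtual fiber or sits under a Seifert fiber direction). Efficiency of the splitting of $H$ and the profinite Bass--Serre tree then give $N_{\widehat H}(\widehat{H_0})\le\widehat{H^v}$, hence $[N_{\widehat H}(\widehat{H_0}):\widehat{H_0}]<\infty$; separability of $H_0$ in $G$ gives $[N_{\widehat G}(\widehat i(\widehat{H_0})):\widehat i(\widehat{H_0})]=\infty$. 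Since normalizer indices are preserved by any isomorphism, $\widehat i$ cannot be one. The free-product step is handled the same way, with Lemma~\ref{normalizerfreeproduct} reducing normalizers in a profinite free product to normalizers in a free factor. This is the missing idea in your proposal: rather than trying to match two tree actions, you should extract from the tree for $H$ a subgroup $H_0$ whose normalizer index jumps between $\widehat H$ and $\widehat G$, and the input that makes this possible is precisely the structural description of nonseparable subgroups in \cite{Sun}.
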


Note that our proof of Theorem \ref{main} is independent of the proofs in \cite{LR} and \cite{BF}. The new ingredient is the author's work in \cite{Sun} that characterizes separability of subgroups of $3$-manifold groups (see Sections \ref{preliminary} and \ref{graphgroupseparability} for more details).

The starting point of our proof of Theorem \ref{main} is an elementary observation of Long and Reid in \cite{LR} (Lemma \ref{observation} in this paper): If $H< G$ is a proper subgroup that is separable in $G$, then $(G,H)$ is not a Grothendieck pair. So we only need to consider nonseparable $H< G=\pi_1(M)$. By the author's characterization of nonseparable subgroups of $3$-manifold groups (\cite{Sun}), there exists a subgroup $H_0< H< G$, such that the normalizer of $H_0$ in $H$ (denoted by $N_H(H_0)$) contains $H_0$ as a finite index subgroup, i.e. $[N_H(H_0):H_0]<\infty$, while the normalizer of $H_0$ in $G$ satisfies $[N_G(H_0):H_0]=\infty$. Then we use tools in profinite groups and profinite graphs to prove that the above behavior of normalizers passes to the profinite completion, which implies that $\widehat{H}\to \widehat{G}$ is not an isomorphism.

The organization of this paper is summarized in the following. In Section \ref{preliminary}, we review basic concepts in profinite completions of groups and subgroup separability, especially Long and Reid's observation that proper separable subgroups do not give Grothendieck pairs. In Section \ref{graphgroupseparability}, we review a graph of group structure on $H$ and the author's characterization on separability of $H< \pi_1(M)$. Then we construct the desired subgroup $H_0< H$, which is contained in a vertex group $H^v<H$ as a finite index subgroup. In Section \ref{completion}, we review basic concepts on graphs of profinite groups and the associated actions on profinite Bass-Serre trees. Then we use the graph of profinite group structure on $\widehat{H}$ to prove that $[N_{\widehat{H}}(H_0):\widehat{H_0}]<\infty$. Actually, in Sections \ref{graphgroupseparability} and \ref{completion}, we need some mild assumptions on the $3$-manifold $M$ (Condition \ref{condition}), so that the characterization in \cite{Sun} is applicable. In Section \ref{proof}, we first prove the Grothendieck rigidity for groups of $3$-manifolds satisfying Condition \ref{condition}, then we prove the Grothendieck rigidity for all finitely generated $3$-manifold groups.

\bigskip
\bigskip

\section{Preliminary on profinite completions and subgroup separability}\label{preliminary}

In this section, we first review some basic concepts on profinite completions of groups, then we review subgroup separability and prove the fundamental observation (Lemma \ref{observation}) for the proof of Theorem \ref{main}. 

\subsection{Profinite completions of groups}\label{profinitepreliminary}

All the following material on profinite groups can be found in Ribes and Zalesskii's book \cite{RZ}. In this paper, when we say groups, we mean abstract groups; and we will emphasize profinite groups when we mean it.
In this paper, the notation of any profinite group has a hat $\ \widehat{}\ $ on it, even if the profinite group is not the profinite completion of an abstract group.

Let $G$ be a finitely generated group, and let $\mathcal{N}$ be the set of all finite index normal subgroups of $G$. For $N_1,N_2\in \mathcal{N}$ with $N_1< N_2$, there is a natural quotient homomorphism $G/N_1\to G/N_2$. Then the {\it profinite completion of $G$} is defined to be the inverse limit of this family of finite quotients $\{G/N\}_{N\in \mathcal{N}}$: 
$$\widehat{G}=\varprojlim_{N\in \mathcal{N}}G/N.$$
For each $N\in \mathcal{N}$, there is a natural surjective homomorphism $\pi_N:\widehat{G}\to G/N$.

 We also have a family of quotient homomorphisms $\{G\to G/N\}_{N\in \mathcal{N}}$ that induces a homomorphism $G\to \widehat{G}$. This homomorphism is injective if and only if $G$ is residually finite. We will only consider residually finite groups in this paper.

The profinite completion $\widehat{G}=\varprojlim_{N\in \mathcal{N}}G/N$ is a subset of the compact topological space $\Pi_{N\in \mathcal{N}}G/N$. Under the subspace topology, $\widehat{G}$ is a compact Hausdorff totally-disconnected topological group, and the image of the natural homomorphism $G\to \widehat{G}$ is dense in $\widehat{G}$.

Let $u:H\to G$ be a group homomorphism. For any finite index normal subgroup $N\lhd G$, we have a homomorphism $H\xrightarrow u G\to G/N$, which factors through $H/u^{-1}(N)$. Since $u^{-1}(N)$ is a finite index normal subgroup of $H$, we have an induced homomorphism $u_N:\widehat{H}\xrightarrow {\pi_{u^{-1}(N)}} H/u^{-1}(N)\to G/N$. Then this family of homomorphisms $\{u_N:\widehat{H}\to G/N\}_{N\in \mathcal{N}}$ gives rise to a homomorphism 
$$\widehat{u}:\widehat{H}\to \widehat{G}=\varprojlim_{N\in \mathcal{N}}G/N,$$
 which is the {\it induced homomorphism} of $u:H\to G$ on profinite completions.

In the case that $H$ is a subgroup $G$, i.e. the homomorphism $H\to G$ is an inclusion, the image of the induced homomorphism $\widehat{H}\to \widehat{G}$ is the closure of $H\subset G\subset \widehat{G}$ in $\widehat{G}$. Moreover, $\widehat{H}\to \widehat{G}$ is injective if and only if for any finite index subgroup $H'< H$, there exists a finite index subgroup $G'< G$, such that $G'\cap H< H'$.

Here is a lemma that allows us to prove Grothendieck rigidity by passing to finite index normal subgroups.

\begin{lemma}\label{finiteindex}
Let $G$ be a group and let $G'\lhd G$ be a finite index normal subgroup. If $G'$ is Grothendieck rigid, then $G$ is Grothendieck rigid.
\end{lemma}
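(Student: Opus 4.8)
The plan is to reduce Grothendieck rigidity of $G$ to that of $G'$ by intersecting an arbitrary subgroup of $G$ with $G'$. Let $H< G$ be a finitely generated proper subgroup; we must show that $\widehat{H}\to\widehat{G}$ is not an isomorphism. Set $K=H\cap G'$. Since $[G:G']<\infty$, the subgroup $K$ has finite index in $H$, so $K$ is finitely generated, and $K< G'$. I would split into two cases according to whether $K=G'$ or $K$ is a proper subgroup of $G'$.

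First, suppose $K=G'$, i.e. $G'< H$. Then $H$ has finite index in $G$ and is proper, so $[G:H]>1$. Because $H$ is a finite index subgroup of the finitely generated group $G$, the map $\widehat{H}\to\widehat{G}$ is injective with image the open subgroup $\overline{H}$ of index $[G:H]>1$; in particular it is not surjective. Equivalently, a proper finite index subgroup is separable, so Lemma \ref{observation} applies. Hence $\widehat{H}\to\widehat{G}$ is not an isomorphism.

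Now suppose $K$ is a proper subgroup of $G'$, and assume for contradiction that $\phi\colon\widehat{H}\to\widehat{G}$ is an isomorphism. I would use the standard fact (\cite{RZ}) that for a finite index subgroup the profinite topology of the ambient group restricts to the full profinite topology of the subgroup: thus $\widehat{G'}$ is identified with the closure $\overline{G'}$, an open subgroup of $\widehat{G}$ of index $[G:G']$, and $\widehat{K}$ is identified with the closure $\overline{K}$, an open subgroup of $\widehat{H}$. The key computation is to locate $\phi^{-1}(\overline{G'})$. Since finite index subgroups are closed in the profinite topology, $\overline{G'}\cap G=G'$, and $\phi$ restricts to the inclusion on $H\subset G\subset\widehat{G}$; hence for $h\in H$ we have $\phi(h)\in\overline{G'}$ if and only if $h\in G'$, i.e. $H\cap\phi^{-1}(\overline{G'})=K$. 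As $\phi^{-1}(\overline{G'})$ is open in $\widehat{H}$ and $H$ is dense in $\widehat{H}$, the set $K$ is dense in $\phi^{-1}(\overline{G'})$, so $\overline{K}=\phi^{-1}(\overline{G'})$. Therefore $\phi$ restricts to an isomorphism $\overline{K}\to\overline{G'}$, and since this restriction extends the inclusion $K\hookrightarrow G'$ and continuous homomorphisms agreeing on the dense subset $K$ coincide, it is exactly the natural map $\widehat{K}\to\widehat{G'}$. Thus $\widehat{K}\to\widehat{G'}$ is an isomorphism with $K< G'$ a finitely generated proper subgroup, contradicting the Grothendieck rigidity of $G'$.

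I expect the main obstacle to be the bookkeeping in the second case: correctly identifying $\widehat{G'}$ and $\widehat{K}$ with the closures $\overline{G'}$ and $\overline{K}$, verifying that the restriction of $\phi$ to these open subgroups is genuinely the inclusion induced map $\widehat{K}\to\widehat{G'}$ rather than some other isomorphism, and justifying the density and continuity arguments that yield $\overline{K}=\phi^{-1}(\overline{G'})$. All the needed facts about finite index subgroups and completions are standard, so once the identifications are set up carefully the contradiction with the rigidity of $G'$ is immediate.
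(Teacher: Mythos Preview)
Your argument is correct and follows essentially the same route as the paper: reduce to showing that if $\widehat{H}\to\widehat{G}$ is an isomorphism then so is $\widehat{H\cap G'}\to\widehat{G'}$, and invoke Grothendieck rigidity of $G'$. The only difference is presentational---the paper obtains this reduction by citing Lemma~2.8 of \cite{BF}, whereas you supply a self-contained proof of that fact via the density and open-subgroup identifications you describe.
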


\begin{proof}
Let $H< G$ be a finitely generated subgroup, with inclusion homomorphism $i:H\to G$. Let $\beta:G\to G/G'$ be the quotient homomorphism (to a finite group). 

Suppose that $\widehat{i}:\widehat{H}\to \widehat{G}$ is an isomorphism, then Lemma 2.8 of \cite{BF} implies that the inclusion induced homomorphism $\widehat{H\cap G'}\to \widehat{G'}$ is an isomorphism. Since $G'$ is Grothendieck rigid, we must have $H\cap G'=G'$, i.e. $G'< H$ holds. 

Since $[G:G']<\infty$, $H$ must be a finite index subgroup of $G$. Then the image of $\widehat{i}:\widehat{H}\to\widehat{G}$ is an open subgroup of $\widehat{G}$ with index $[G:H]$, by the fundamental correspondence between finite index subgroups of $G$ and $\widehat{G}$. So we must have $H=G$.

\end{proof}
It is not hard to prove that Lemma \ref{finiteindex} still holds if $G'$ is only a finite index subgroup of $G$, but Lemma \ref{finiteindex} is good enough for proving Theorem \ref{main}.

\subsection{Subgroup separability}
Now we turn to the concept of subgroup separability. For a group $G$ and a subgroup $H<G$, we say that {\it $H$ is separable in $G$} if for any $g\in G\setminus H$, there exists a homomorphism $\phi:G\to Q$ to a finite group, such that $\phi(g)\notin \phi(H)$. Moreover, we say a group $G$ is {\it LERF} (locally extended residually finite) if any finitely generated subgroup $H< G$ is separable in $G$.

The starting point of our proof of Theorem \ref{main} is the following lemma. It is essentially Lemma 2.5 of \cite{LR}, and we state it in a slightly weaker form. Although this lemma is already proved in \cite{LR}, we still prove it here, since the proof is simple and it plays a fundamental role in this paper.

\begin{lemma}\label{observation}
Let $G$ be a group, and let $H< G$ be a proper subgroup that is separable in $G$. Then $(G,H)$ is not a Grothendieck pair.
\end{lemma}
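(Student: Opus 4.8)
The plan is to show that the image of the induced map $\widehat{i}:\widehat{H}\to\widehat{G}$ is a proper subgroup of $\widehat{G}$, which immediately prevents $\widehat{i}$ from being an isomorphism and hence prevents $(G,H)$ from being a Grothendieck pair. I will use two facts already recorded in this section: that the image of $\widehat{i}$ equals the closure $\overline{H}$ of $H$ inside $\widehat{G}$, and that every homomorphism $\phi:G\to Q$ to a finite group extends to a continuous homomorphism $\widehat{\phi}:\widehat{G}\to Q$ restricting to $\phi$ on the dense subgroup $G\subset\widehat{G}$.

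First I would use that $H$ is a proper subgroup to choose an element $g\in G\setminus H$. Separability of $H$ in $G$ then furnishes a homomorphism $\phi:G\to Q$ to a finite group with $\phi(g)\notin\phi(H)$. Extending to $\widehat{\phi}:\widehat{G}\to Q$, I would note that $\widehat{\phi}$ is continuous and that $Q$, being finite, is discrete, so the finite set $\phi(H)=\widehat{\phi}(H)$ is closed in $Q$.

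The key step is then to locate $g$ outside $\overline{H}$. Since $\widehat{\phi}$ is continuous and agrees with $\phi$ on $G$, the image of the closure satisfies $\widehat{\phi}(\overline{H})\subseteq\overline{\widehat{\phi}(H)}=\overline{\phi(H)}=\phi(H)$. On the other hand, viewing $g$ as an element of $\widehat{G}$ via $G\subset\widehat{G}$, we have $\widehat{\phi}(g)=\phi(g)\notin\phi(H)$. Therefore $g\notin\overline{H}$, so $\overline{H}$ is a proper subset of $\widehat{G}$, since it omits the point $g$. As $\overline{H}$ is exactly the image of $\widehat{i}$, the map $\widehat{i}:\widehat{H}\to\widehat{G}$ fails to be surjective, and in particular is not an isomorphism; thus $(G,H)$ is not a Grothendieck pair.

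I do not expect a serious obstacle here: the only point requiring a little care is the containment $\widehat{\phi}(\overline{H})\subseteq\phi(H)$, which rests on the continuity of $\widehat{\phi}$ together with the closedness of the finite set $\phi(H)$, and on the density of $G$ in $\widehat{G}$ used to identify $\widehat{\phi}|_G$ with $\phi$. This is exactly the mechanism by which the separating finite quotient witnessing $g\notin H$ is promoted to a witness that $g\notin\overline{H}$.
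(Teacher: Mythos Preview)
Your proof is correct and takes essentially the same approach as the paper: both pick $g\in G\setminus H$, use separability to produce a finite quotient $\phi:G\to Q$ with $\phi(g)\notin\phi(H)$, and deduce that $\widehat{i}$ is not surjective because its image maps into $\phi(H)$ under the induced map $\widehat{G}\to Q$. The only cosmetic difference is that the paper packages this via a commutative square with $\pi_N$ and $\pi_{N\cap H}$ and argues by contradiction, whereas you phrase it topologically via $\overline{H}=\mathrm{im}\,\widehat{i}$ and continuity of $\widehat{\phi}$.
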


\begin{proof}
Since $H< G$ is a proper subgroup, there exists an element $g\in G\setminus H$. Since $H$ is separable in $G$, there exists a finite index normal subgroup $N\lhd G$, such that for the quotient homomorphism $\phi:G\to G/N$, $\phi(g)\notin \phi(H)$ holds.

Let $i:H\to G$ be the inclusion,  let $\bar{i}:H/N\cap H \to G/N$ be the induced homomorphism on quotient groups, and let $\widehat{i}:\widehat{H}\to \widehat{G}$ be the induced homomorphism on profinite completions. Then we have the following commutative diagram.
\begin{diagram}
\widehat{H} & \rTo^{\widehat{i}} & \widehat{G}\\
\dTo_{\pi_{N\cap H}} &                               & \dTo_{\pi_N}\\
H/N\cap H & \rTo^{\bar{i}} & G/N
\end{diagram}

Suppose that $\widehat{i}:\widehat{H}\to \widehat{G}$ is an isomorphism. Since $\pi_N:\widehat{G}\to G/N$ is surjective, $\pi_N\circ \widehat{i}:\widehat{H}\to G/N$ is surjective. However, by our construction of $N$, $gN$ does not lie in the image of $\bar{i}:H/N\cap H\to G/N$, so $\bar{i}\circ \pi_{N\cap H}:\widehat{H}\to G/N$ is not surjective. 

So we get a contradiction, thus $(G,H)$ is not a Grothendieck pair.
\end{proof}

Lemma \ref{observation} immediately implies the following corollary, since both LERFness and Grothendieck rigidity only concern finitely generated subgroups.

\begin{corollary}\label{observation2}
Let $G$ be a LERF group, then it is Grothendieck rigid.
\end{corollary}

Obviously, Corollary \ref{observation2} implies that all LERF $3$-manifold groups are Grothendieck rigid.
In particular, Agol's celebrated result (\cite{Agol}) that hyperbolic $3$-manifold groups are LERF implies that all hyperbolic $3$-manifold groups are Grothendieck rigid. This is actually the main result of \cite{LR}, while Agol's work was not available when \cite{LR} was written.

Unfortunately, there are a lot of $3$-manifold groups that are not LERF. To prove the Grothendieck rigidity of these groups, it remains to prove that any nonseparable subgroup does not give a Grothendieck pair.

\bigskip
\bigskip

\section{A graph of group structure on $H<\pi_1(M)$ and\\ the construction of $H_0<H$ for nonseparable $H<\pi_1(M)$}\label{graphgroupseparability}

In this section, we first describe a graph of group structure on $H< \pi_1(M)$, then review the author's characterization on separability of $H< \pi_1(M)$ and construct the desired subgroup $H_0< H$ for a nonseparable $H<\pi_1(M)$.

In this and the next section, we restrict to $3$-manifolds satisfying the following condition, which form the essential case towards the proof of Theorem \ref{main}.
\begin{condition}\label{condition}
\begin{enumerate}
\item $M$ is compact, orientable, irreducible and $\partial$-irreducible.
\item $M$ has nontrivial torus decomposition and does not support the Sol geometry.
\item Under the torus decomposition of $M$, no Seifert piece is the twisted $I$-bundle over Klein bottle. 
\end{enumerate}
\end{condition}

Note that Condition \ref{condition} (1) and (2) are assumed in Theorem 1.3 of \cite{Sun}, which gives the characterization of separability of $H<\pi_1(M)$. Condition \ref{condition} (3) is a mild condition, and it is for convenience of our proof.

\subsection{A graph of group structure on $H<\pi_1(M)$}\label{characterization}

For a $3$-manifold $M$ satisfying Condition \ref{condition} (1), it has a canonical torus decomposition (see Theorem 1.9 of \cite{Hat}): There exists a finite collection $\mathcal{T}\subset M$ of disjoint incompressible tori, such that each component of $M\setminus \mathcal{T}$ is either atoroidal or a Seifert manifold, and a minimal such collection $\mathcal{T}$ is unique up to isotopy. By Condition \ref{condition} (2), $\mathcal{T}$ is not empty. Moreover, by Condition \ref{condition} (2) (3) and the classification of Seifert fibering structures (see Theorem 2.3 of \cite{Hat}), each component of $M\setminus \mathcal{T}$ has a unique Seifert fibering structure, and its base orbifold has negative Euler characteristic.

Given the collection of tori $\mathcal{T}$, $M$ has a graph of space structure, and we denote the dual graph by $\Gamma$. Here each component $M^v$ of $M\setminus \mathcal{T}$ (called a piece of $M$) corresponds to a vertex $v$ of $\Gamma$, and each component $T^e$ of $\mathcal{T}$ corresponds to an edge $e$ of $\Gamma$. Then the fundamental group $\pi_1(M)$ has a graph of group structure with dual graph $\Gamma$. Here the vertex group corresponding to vertex $v$ is $\pi_1(M^v)$, and the edge group corresponding to edge $e$ is $\pi_1(T^e)$. We will review the profinite counterpart of graph of groups in Section \ref{completion}.

For any finitely generated subgroup $H<\pi_1(M)$, we take the corresponding covering space $\pi:M_H\to M$. Then $\pi^{-1}(\mathcal{T})$ induces a graph of space structure on $M_H$ by the same manner as $M$. Here each component of $\pi^{-1}(\mathcal{T})$ is either a torus, or a cylinder, or a plane. 

Since $H$ is finitely generated, by Lemma 3.1 of \cite{Sun}, there is a unique minimal codimension-$0$ connected submanifold $M_H^c\subset M_H$, such that it is a union of finitely many pieces of $M_H$, contains all pieces of $M_H$ with nontrivial $\pi_1$, and the inclusion $M_H^c\to M_H$ induces an isomorphism on fundamental groups. Since $H$ is isomorphic to $\pi_1(M_H^c)$, the graph of space structure on $M_H^c$ (induced from $M_H$) gives a graph of group structure on $H$, with finite dual graph $\Gamma_H$.

For the above graph of group structure on $H$, $H$ acts naturally on the corresponding Bass-Serre tree $T_H$. Basically the Bass-Serre tree $T_H$ is the dual graph of the universal cover $\widetilde{M_H^c}$ of $M_H^c$, and the $H$-action on $T_H$ is induced by its action on $\widetilde{M_H^c}$. Then the vertex (edge) stabilizers of this $H$-action on $T_H$ are conjugations of vertex (edge) groups of $H$, and the quotient of $T_H$ by this $H$-action is $\Gamma_H$. We will review the profinite counterpart of Bass-Serre theory in Section \ref{completion}.

\subsection{The construction of $H_0<H$ for nonseparable $H<\pi_1(M)$}\label{vertexsubgroup}

We continue to use notations from the last subsection. For each piece $M_H^v$ of $M_H^c$ with nontrivial $\pi_1$, it covers a piece $M^v$ of $M$. We are interested in those pieces $M_H^v$ such that one of the following hold.
 \begin{enumerate}
 \item $M^v$ is a finite volume hyperbolic $3$-manifold (i.e. an atoroidal manifold with tori boundary), and $M_H^v$ corresponds to a virtual fiber surface subgroup of $\pi_1(M^v)$. 
 \item $M^v$ is a Seifert manifold, and the $S^1$-bundle (over $2$-orbifold) structure on $M^v$ lifts to an $\mathbb{R}$-bundle structure on $M_H^v$.
 \end{enumerate}
 
More precisely, in case (1), there is a compact surface $\Sigma^v$ such that $M_H^v=\Sigma^v\times \mathbb{R}$ ($\Sigma^v$ is orientable) or $\Sigma^v\widetilde{\times} \mathbb{R}$ ($\Sigma^v$ is nonorientable). Moreover, the covering map $M_H^v\to M^v$ factors through a finite cover $N^v\to M^v$,  such that $N^v$ has a surface bundle over circle structure (with orientable fiber surface $\Sigma^v$) or a semi-bundle structure (a union of two twisted $I$-bundles over nonorientable surface $\Sigma^v$). 

In case (2), $M_H^v$ is homeomorphic to $S^v\times \mathbb{R}$ or $S^v\widetilde{\times} \mathbb{R}$  for some surface $S^v$. If $S^v$ is compact, a similar description as in case (1) holds, and we let $\Sigma^v=S^v$. If $S^v$ is not compact, we take a compact subsurface $\Sigma^v\subset S^v$ such that the inclusion $\Sigma^v\to S^v$ induces an isomorphism on fundamental groups, each boundary component of $\Sigma^v$ either lies in $\partial S^v$ or lies in the interior of $S^v$, and no component of $S^v\setminus \Sigma^v$ is a disc or an annulus. 

When $M_H^v=\Sigma^v\times\mathbb{R}$ or $\Sigma^v\widetilde{\times}\mathbb{R}$ (case (1) and the first subcase of case (2)), $\Sigma^v$ is called a {\it virtual fiber surface}.  When $M_H^v=S^v\times \mathbb{R}$ or $S^v\tilde{\times} \mathbb{R}$ for a noncompact surface $S_v$ (the second subcase of case (2)), the compact subsurface $\Sigma^v\subset S^v$ is called a {\it partial fiber surface}.

We consider each virtual fiber or partial fiber surface $\Sigma^v$ constructed above as a (possibly non-proper) subsurface of $M_H^v$, then the inclusion $\Sigma^v\to M_H^v$ is a homotopy equivalence. Since we assumed that $M_H^v$ has nontrivial fundamental group and $\Sigma^v$ has boundary, $\Sigma^v$ has nonpositive Euler characteristic.

If there is a component $C\subset \pi^{-1}(\mathcal{T})$ that intersects with two such subsurfaces $\Sigma^v$ and $\Sigma^w$ (on their boundaries), then $C$ must be a cylinder, and $\Sigma^v\cap C$ and $\Sigma^w\cap C$ are isotopic curves on $C$. Then we isotopy $\Sigma^v$ and $\Sigma^w$ so that they intersect with $C$ along the same curve and we paste them together along this curve. By doing such pasting procedure along all cylinder components of $\pi^{-1}(\mathcal{T})$, whenever possible, we get the {\it almost fiber surface} $\Phi(H)$ defined in \cite{Sun}. We can consider $\Phi(H)$ as a (possibly disconnected nonproper) compact subsurface of $M_H^c\subset M_H$. By construction, $\Phi(H)$ has a natural graph of  space structure. We denote the dual graph of $\Phi(H)$ by $\Gamma_{\Phi(H)}$, then it is naturally a subgraph of $\Gamma_H$ (the dual graph of $M_H^c$).

In \cite{Sun}, the author proved the following characterization of separability of $H<\pi_1(M)$. 

\begin{theorem}\label{characterizationthm}
Let $M$ be a $3$-manifold satisfying Condition \ref{condition}, and let $H<\pi_1(M)$ be a finitely generated subgroup. Then there is a canonically defined group homomorphism $s:H_1(\Phi(H);\mathbb{Z})\to \mathbb{Q}_+^{\times}$ that factors through $H_1(\Gamma_{\Phi(H)};\mathbb{Z})$, such that $H$ is separable in $\pi_1(M)$ if and only if $s$ is the trivial homomorphism.

In particular, if $H$ is not separable in $\pi_1(M)$, then $\Gamma_{\Phi(H)}$ contains a simple cycle.
\end{theorem}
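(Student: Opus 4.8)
The first sentence of Theorem~\ref{characterizationthm} is exactly the characterization of separability established in \cite{Sun}, so in the present paper I would treat both the homomorphism $s:H_1(\Phi(H);\mathbb{Z})\to\mathbb{Q}_+^{\times}$ and the equivalence ``$H$ is separable in $\pi_1(M)$ if and only if $s$ is trivial'' as an input. The only assertion genuinely left to prove is the final (``in particular'') sentence: that nonseparability of $H$ forces $\Gamma_{\Phi(H)}$ to contain a simple cycle. My plan is to extract this purely from the stated factorization of $s$.

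The key is to use that $s$ factors through $H_1(\Gamma_{\Phi(H)};\mathbb{Z})$. Writing $p_*$ for the map on first homology induced by projecting $\Phi(H)$ onto its dual graph $\Gamma_{\Phi(H)}$, the hypothesis gives a homomorphism $\bar{s}$ with
$$
s \;=\; \bar{s}\circ p_*\colon\quad H_1(\Phi(H);\mathbb{Z}) \xrightarrow{\;p_*\;} H_1(\Gamma_{\Phi(H)};\mathbb{Z}) \xrightarrow{\;\bar{s}\;} \mathbb{Q}_+^{\times}.
$$
First I would assume that $H$ is not separable in $\pi_1(M)$; then by the characterization $s$ is a nontrivial homomorphism. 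Since $s=\bar{s}\circ p_*$ is nontrivial, the factor $\bar{s}$ cannot be trivial either, and in particular its domain $H_1(\Gamma_{\Phi(H)};\mathbb{Z})$ cannot be the zero group.

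It then remains to translate $H_1(\Gamma_{\Phi(H)};\mathbb{Z})\neq 0$ into the existence of a simple cycle. Here $\Gamma_{\Phi(H)}$ is a finite graph, i.e. a $1$-dimensional CW complex, so its first homology is free abelian of rank equal to the first Betti number $b_1=E-V+C$ (edges minus vertices plus connected components), with a basis given by independent cycles. Thus nontriviality of $H_1(\Gamma_{\Phi(H)};\mathbb{Z})$ is equivalent to $b_1>0$, hence to $\Gamma_{\Phi(H)}$ failing to be a forest, hence to the existence of at least one cycle; and any cycle in a finite graph contains an embedded (simple) cycle. This produces the required simple cycle in $\Gamma_{\Phi(H)}$.

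I do not anticipate a real obstacle in this deduction: all of the substantive $3$-manifold topology and subgroup-separability analysis is already packaged inside the cited characterization from \cite{Sun}, and what is left is the elementary homological and graph-theoretic observation above. The only point needing mild care is to make the factorization map $p_*$ explicit enough that nontriviality of $s$ is seen to propagate to $\bar{s}$, but this is immediate from the identity $s=\bar{s}\circ p_*$.
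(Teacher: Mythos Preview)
Your proposal is correct and matches how the paper handles this theorem. In fact the paper gives no proof at all: the entire statement, including the ``in particular'' clause, is quoted from \cite{Sun}, and the text immediately following the theorem says explicitly that only the ``in particular'' part will be used and that the definition of $s$ is not important here. Your elementary deduction of the ``in particular'' clause from the factorization through $H_1(\Gamma_{\Phi(H)};\mathbb{Z})$ is exactly the intended reading of that clause, so there is nothing to add.
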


Here $\mathbb{Q}_+^{\times}$ is the group of positive rationals with the multiplicative operation. The homomorphism $s$ is called the {\it generalized spirality character} of $H$, and its definition is not important for this paper. We will only use the ``in particular'' part of Theorem \ref{characterizationthm} in this paper.

At first, we prove the following lemma that provides many separable subgroups in $\pi_1(M)$.
\begin{lemma}\label{simpleseparable}
Let $M$ be a $3$-manifold satisfying Condition \ref{condition} and let $M^v\subset M$ be a piece of $M$ under the torus decomposition. Then any finitely generated subgroup $H< \pi_1(M^v)< \pi_1(M)$ is separable in $\pi_1(M)$. 
\end{lemma}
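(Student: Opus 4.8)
The plan is to derive the lemma directly from the ``in particular'' part of Theorem \ref{characterizationthm}. That part asserts that a \emph{non}separable $H$ forces a simple cycle in $\Gamma_{\Phi(H)}$, so its contrapositive says that $H$ is separable in $\pi_1(M)$ as soon as $\Gamma_{\Phi(H)}$ contains no simple cycle. Since $\Gamma_{\Phi(H)}$ is a subgraph of $\Gamma_H$ (the dual graph of $M_H^c$), it suffices to show that $\Gamma_H$ itself contains no simple cycle, i.e. that $\Gamma_H$ is a tree. Thus the whole lemma reduces to a statement about the underlying graph of the induced graph of group decomposition of $H$, and the hyperbolic/Seifert nature of $M^v$ plays no role beyond guaranteeing that $\pi_1(M^v)$ is a genuine vertex group.

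To see that $\Gamma_H$ is a tree I would use the hypothesis $H<\pi_1(M^v)$ together with Bass--Serre theory. Let $T$ denote the Bass--Serre tree of the torus decomposition of $\pi_1(M)$; the subgroup $\pi_1(M^v)$ is the stabilizer of a vertex $\tilde v$ of $T$, so $H$ fixes $\tilde v$. Because $M_H^c$ contains the piece $M_H^v$ lying over $M^v$, the vertex $\tilde v$ survives in the $H$-tree $T_H\subset T$, and its image $\bar v$ in $\Gamma_H=T_H/H$ has the single preimage $\{\tilde v\}$ since $H$ fixes $\tilde v$. Consequently the vertex group of $\Gamma_H$ at $\bar v$ may be taken to be $\mathrm{Stab}_H(\tilde v)=H$ itself, so the vertex groups of the graph of group decomposition of $H$ already generate all of $H$. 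By the standard identity $\pi_1(\Gamma_H)\cong H/\langle\langle \text{vertex groups}\rangle\rangle$, this makes $\pi_1(\Gamma_H)$ trivial, hence $b_1(\Gamma_H)=0$ and $\Gamma_H$ is a tree. Combining the two steps, $\Gamma_{\Phi(H)}\subseteq\Gamma_H$ is a subgraph of a tree, so it is a forest and contains no simple cycle, and Theorem \ref{characterizationthm} then yields that $H$ is separable in $\pi_1(M)$.

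The routine points (that $\tilde v$ really indexes a vertex of $T_H$, that the chosen lift realizing the vertex group at $\bar v$ can be taken to be $\tilde v$, and that $\Gamma_{\Phi(H)}$ sits inside $\Gamma_H$ as recorded in Section \ref{characterization}) are straightforward bookkeeping. The one step requiring genuine care — and the main obstacle — is the claim that a global fixed vertex forces $\Gamma_H$ to be a tree. I would justify it via the generation argument above, or equivalently by lifting a reduced loop of $\Gamma_H$ based at $\bar v$ to an edge-path in $T_H$ starting at $\tilde v$ and noting that, since every $H$-translate of $\tilde v$ equals $\tilde v$, the lift closes up to a non-backtracking closed path in the tree $T_H$ and must therefore be trivial.
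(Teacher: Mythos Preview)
Your argument is correct and follows essentially the same route as the paper's proof: both reduce to showing that the relevant dual graph is a tree, so that $\Gamma_{\Phi(H)}$ is a forest and Theorem~\ref{characterizationthm} applies. The paper simply asserts that the dual graph of $M_H$ is a tree, whereas you supply an explicit Bass--Serre justification (the vertex group at $\bar v$ is all of $H$, hence $\pi_1(\Gamma_H)=1$); this is exactly the content behind the paper's one-line claim, so the approaches coincide.
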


\begin{proof}
We take the covering space $M_H\to M$ corresponding to the subgroup $H<\pi_1(M)$. For the graph of space structure on $M_H$, its dual graph must be a tree. 

Then the dual graph $\Gamma_{\Phi(H)}$ of the almost fiber surface $\Phi(H)$ is a subgraph of the dual graph of $M_H$. So $\Gamma_{\Phi(H)}$ is a union of trees (actually a tree) and $H_1(\Gamma_{\Phi(H)};\mathbb{Z})$ is trivial. Then Theorem \ref{characterizationthm} implies that $H$ is separable in $\pi_1(M)$.
\end{proof}

Now we construct the desired subgroup $H_0< H$ for a nonseparable subgroup $H< \pi_1(M)$.

\begin{proposition}\label{nonseparable}
Let $M$ be a $3$-manifold satisfying Condition \ref{condition} and let $H< \pi_1(M)$ be a finitely generated nonseparable subgroup. Under the graph of group structure on $H$, there is a vertex group $H^v<H$ and a subgroup $H_0<H^v$ such that the following holds.
\begin{enumerate}
\item The index of $H_0$ in $H^v$ is either $1$ or $2$.
\item $H_0$ is a non-abelian free group. 
\item Any finitely generated subgroup of $H^v$ is separable in $\pi_1(M)$.
\item The normalizer of $H_0$ in $H$ is $H^v$, i.e. $N_H(H_0)=H^v$.
\item The normalizer of $H_0$ in $\pi_1(M)$ satisfies $[N_{\pi_1(M)}(H_0):H_0]=\infty$.
\end{enumerate}
\end{proposition}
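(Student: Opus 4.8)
The plan is to use the ``in particular'' conclusion of Theorem \ref{characterizationthm}: since $H$ is nonseparable, the dual graph $\Gamma_{\Phi(H)}$ of the almost fiber surface contains a simple cycle. I would pick such a simple cycle and let it pass through a vertex $v$ of $\Gamma_{\Phi(H)} \subset \Gamma_H$, giving a vertex piece $M_H^v$ carrying a virtual fiber or partial fiber surface $\Sigma^v$. First I would define $H^v = \pi_1(M_H^v)$ (the corresponding vertex group of $H$), and I would set $H_0$ to be the fiber surface subgroup $\pi_1(\Sigma^v) < H^v$. The index statement (1) comes from the structure recalled in Section \ref{vertexsubgroup}: when $M_H^v = \Sigma^v \times \mathbb{R}$ the inclusion $\Sigma^v \to M_H^v$ is a homotopy equivalence so $H_0 = H^v$ (index $1$), whereas in the twisted (semi-bundle / nonorientable) subcase $M_H^v \widetilde{\times} \mathbb{R}$ the orientation double cover gives index $2$. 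For (2), $\Sigma^v$ has nonpositive Euler characteristic and nonempty boundary (as noted just before Theorem \ref{characterizationthm}), and since the cycle forces $\Sigma^v$ to be nonsimply-connected with boundary, $\pi_1(\Sigma^v)$ is a nontrivial free group; ruling out $\mathbb{Z}$ requires checking $\Sigma^v$ is not an annulus, which is guaranteed by the construction (in the partial fiber case we explicitly discarded annular complementary components, and the remaining surfaces have $\chi < 0$).

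For (3) I would invoke Lemma \ref{simpleseparable}: any finitely generated subgroup of a single vertex group $\pi_1(M^v)$ is separable in $\pi_1(M)$, and $H^v$ lies inside a single such piece group, so the separability passes down. For (4), I would show $N_H(H_0) = H^v$ using the action of $H$ on its Bass-Serre tree $T_H$. Since $H_0$ is a non-abelian free group acting on $T_H$, it fixes a unique vertex (the one stabilized by $H^v$), because a nontrivial free subgroup cannot fix two vertices (its fixed-point set in the tree is a single vertex when the stabilizers are ``large enough''); any element normalizing $H_0$ must preserve this fixed vertex and hence lie in its stabilizer $H^v$. The reverse inclusion $H^v \subseteq N_H(H_0)$ holds since $H_0 \lhd H^v$ (index $1$ or $2$). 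For (5), the chosen simple cycle in $\Gamma_{\Phi(H)}$ is the crucial ingredient: it produces in $\pi_1(M)$ a genuinely infinite-order normalizing phenomenon. Concretely, traversing the cycle yields an element $g \in \pi_1(M)$ (coming from the spirality/gluing along the cylinder boundaries) that conjugates $H_0$ into a proper finite-index subgroup of itself inside $\pi_1(M^v)$, so the powers $g^n$ all normalize (a commensurable copy of) $H_0$ while escaping $H_0$, forcing $[N_{\pi_1(M)}(H_0):H_0] = \infty$.

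The main obstacle I expect is part (5): establishing that the simple cycle really yields an element of $\pi_1(M)$ normalizing $H_0$ with infinite index. The subtlety is that $H_0 = \pi_1(\Sigma^v)$ sits inside the \emph{ambient} vertex group $\pi_1(M^v)$, and the normalizer must be computed in $\pi_1(M)$, not in $H$. The cycle in $\Gamma_{\Phi(H)}$ should correspond to a loop in the base $M$ that, lifted appropriately, glues $\Sigma^v$ to itself along cylinder boundaries via a homeomorphism of infinite order in the mapping class sense (this is exactly the spirality phenomenon making $s$ nontrivial). I would make this precise by working in the covering space $M_H$ and its deck-transformation interaction with $\pi_1(M)$: the self-intersection of $\Sigma^v$ along a cylinder component forces a nontrivial translation along the $\mathbb{R}$-direction of the fiber bundle, and this translation, realized as conjugation in $\pi_1(M^v)$, has infinite order modulo $H_0$. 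Verifying that this conjugation genuinely normalizes $H_0$ (rather than just some commensurable subgroup) may require passing to a finite-index refinement of $H_0$, but after doing so the index $[N_{\pi_1(M)}(H_0):H_0]$ remains infinite, which is all that is needed.
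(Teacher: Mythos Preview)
Your setup for (1)--(4) tracks the paper's line, but with two imprecisions worth fixing. In (1), the inclusion $\Sigma^v \hookrightarrow M_H^v$ is a homotopy equivalence in \emph{both} the product and twisted-$\mathbb{R}$-bundle cases, so $H^v = \pi_1(\Sigma^v)$ always; the index-$2$ subgroup is not ``the fiber surface subgroup $\pi_1(\Sigma^v)$'' but the fundamental group of the orientable double cover $\widetilde{\Sigma^v}$ when $\Sigma^v$ happens to be nonorientable. In (2), your reason for excluding the annulus is wrong: the clause ``no component of $S^v\setminus\Sigma^v$ is an annulus'' constrains the complement, not $\Sigma^v$ itself. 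The paper instead uses that $v$ lies on a simple cycle, hence has degree $\ge 2$ in $\Gamma_{\Phi(H)}$, so $\Sigma^v\cap\partial M_H^v$ has at least two components; in the partial-fiber case there is in addition a boundary circle in the interior of $M_H^v$, giving at least three boundary components. You must also rule out the M\"obius band (same reasoning, plus the negative-Euler-characteristic base orbifold in the Seifert virtual-fiber case).

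The genuine gap is (5). Your mechanism---an element $g$ obtained by traversing the cycle that conjugates $H_0$ into a proper finite-index subgroup of itself---does not produce an element of $N_{\pi_1(M)}(H_0)$ at all; as you concede, it lands only in the commensurator, and there is no way to promote infinite commensurator index to infinite normalizer index here. The paper's argument is both simpler and entirely local to the single piece $M^v$: the cycle is used only to locate a vertex $v$ with $\pi_1(\Sigma^v)$ non-abelian, and plays no further role. The infinite normalizer then comes from the $\mathbb{R}$-direction of $M_H^v$. If $M^v$ is hyperbolic, the very definition of ``virtual fiber surface'' says $M_H^v\to M^v$ factors through a finite cover $N^v$ that is a $\Sigma^v$-bundle over $S^1$ (or, when $\Sigma^v$ is nonorientable, a further double cover $\widetilde{N^v}$ fibering with fiber $\widetilde{\Sigma^v}$); then all of $\pi_1(N^v)$ (resp.\ $\pi_1(\widetilde{N^v})$) normalizes $H_0$ with infinite index. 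If $M^v$ is Seifert, the regular-fiber subgroup $\mathbb{Z}<\pi_1(M^v)$ meets $H^v$ trivially and centralizes $H_0$ (this is precisely why one passes to the orientable $H_0$ in the nonorientable case), again forcing $[N_{\pi_1(M)}(H_0):H_0]=\infty$.
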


\begin{proof}
By Theorem \ref{characterizationthm}, the nonseparability of $H$ in $\pi_1(M)$ implies that $\Gamma_{\Phi(H)}$ contains a simple cycle.

Take any vertex $v$ in this simple cycle, then it has degree at least $2$. This vertex $v$ corresponds to a piece $\Sigma^v\subset \Phi(H)$, and $\Sigma^v$ is contained in a piece $M_H^v$ of $M_H$. We first claim that $\Sigma^v$ is neither an annulus nor a Mobius band. 

If $\Sigma^v$ is a virtual fiber surface and $M_H^v$ covers a finite volume hyperbolic piece $M^v\subset M$, then such a virtual fiber surface can not be an annulus or a Mobius band. If $\Sigma^v$ is a virtual fiber surface and $M_H^v$ covers a Seifert piece $M^v\subset M$, then $\Sigma^v$ is a finite cover of the base orbifold of $M^v$. By Condition \ref{condition} (2) and (3), the base orbifold of $M^v$ has negative Euler characteristic, so $\Sigma^v$ is neither an annulus nor a Mobius band. If $\Sigma^v$ is a partial fiber surface, then it is a nonproper subsurface of $M_H^v$, thus at least one
 boundary component of $\Sigma^v$ is contained in the interior of $M_H^v$. Since $v$ has degree at least $2$, $\Sigma^v\cap \partial M_H^v$ has at least two components. So $\Sigma^v$ has at least $3$ boundary components, which is neither an annulus nor a Mobius band.

Since $\Sigma^v$ is neither an annulus nor a Mobius band, $\pi_1(\Sigma^v)$ is a non-free abelian group. By our construction, the inclusion $\Sigma^v\to M_H^v$ is a homotopy equivalence. We take the vertex subgroup $H^v<H$ to be $\pi_1(M_H^v)\cong \pi_1(\Sigma^v)$, which is a non-abelian free group. If $\Sigma^v$ is an orientable surface, we simply take $H_0=H^v$. If $\Sigma^v$ is nonorientable, we take $H_0$ to be the group of the orientable double cover $\widetilde{\Sigma^v}\to \Sigma^v$, then $H_0$ is an index-$2$ (normal) subgroup of $H^v$. So $H_0$ and $H^v$ satisfy conditions (1) and (2).

Since $H^v$ is contained in a vertex subgroup $\pi_1(M^v)$ of $\pi_1(M)$, any finitely generated subgroup of $H^v$ is also contained in $\pi_1(M^v)$. So this subgroup is separable in $\pi_1(M)$, by Lemma \ref{simpleseparable}, thus condition (3) holds.

At first, it is clear that $H^v$ is contained in the normalizer of $H_0$. For the $H$-action on its Bass-Serre tree $T_H$, $H^v<H$ is the stabilizer of a vertex $\hat{v}\in T_H$. For any $h\in N_H(H_0)$, we have $h^{-1}H_0h=H_0$.  Since $H_0<H^v$ stabilizes $\hat{v}$, $H_0$ also lies in the stabilizer of $h(\hat{v})\in T_H$. We suppose that $h(\hat{v})\ne \hat{v}$, then $H_0$ stabilizes the nontrivial subtree of $T_H$ spanned by $\hat{v}$ and $h(\hat{v})$, and in particular $H_0$ stabilizes an edge of $T_H$. So $H_0$ is contained in a conjugation of $ H^e<H$ for some edge $e\in \Gamma_H$. Since each edge group $H^e$ is a subgroup of $\mathbb{Z}^2\cong\pi_1(T^2)$, $H_0$ is isomorphic to a subgroup of $\mathbb{Z}^2$. It contradicts with condition (2) that $H_0$ is a non-abelian free group, so we must have $h(\hat{v})=\hat{v}$. Then $h$ lies in the stabilizer of $\hat{v}$, thus $h\in H^v$. So we have $N_H(H_0)=H^v$, thus condition (4) holds.

If $M_H^v$ covers a finite volume hyperbolic piece $M^v\subset M$, then the covering map factors through a finite cover $N^v$ of $M^v$, such that $M_H^v$ corresponds to a fiber subgroup of $\pi_1(N^v)$.
More precisely, if $\Sigma^v$ is orientable, then $M_H^v=\Sigma^v\times \mathbb{R}$ and $N^v=\Sigma^v\times I/(x,0)\sim(\phi^v(x),1)$ for some pseudo-Anosov map $\phi^v:\Sigma^v\to \Sigma^v$. So $\pi_1(N^v)$ is contained in the normalizer of $H_0=\pi_1(\Sigma^v)$ in $\pi_1(M)$. If $\Sigma^v$ is nonorientable, then $N^v$ is a union of two twisted $I$-bundles over $\Sigma^v$ and the normalizer of $\pi_1(\Sigma^v)$ in $\pi_1(N^v)$ is actually $\pi_1(\Sigma^v)$. Recall that $\widetilde{\Sigma^v}$ is the orientable double cover of $\Sigma^v$, and $H_0=\pi_1(\widetilde{\Sigma^v})<\pi_1(\Sigma^v)=H^v$.
We take the double cover $\widetilde{N^v}$ of $N^v$ such that it is a $\widetilde{\Sigma^v}$-bundle over the circle. Then $\pi_1(\widetilde{N^v})<\pi_1(M)$ is contained in the normalizer of $H_0=\pi_1(\widetilde{\Sigma^v})$ in $\pi_1(M)$.

If $M_H^v$ covers a Seifert piece $M^v\subset M$, then the fiber subgroup of $\pi_1(M^v)$ (isomorphic to $\mathbb{Z}$) intersects with $H^v=\pi_1(\Sigma^v)$ trivially. If $\Sigma^v$ is orientable, then the fiber subgroup commutes with $H_0=\pi_1(\Sigma^v)$; if $\Sigma^v$ is nonorientable, then the fiber subgroup does not commute with $H^v=\pi_1(\Sigma^v)$, but it commutes with $H_0=\pi_1(\widetilde{\Sigma^v})$. 

In all these cases, the normalizer of $H_0$ in $\pi_1(M)$ always contains $H_0$ as an infinite index subgroup, thus condition (5) holds. 
\end{proof}

\bigskip
\bigskip

\section{The graph of profinite group structure on $\widehat{H}$ and \\ the normalizer of $\widehat{H_0}$ in $\widehat{H}$}\label{completion}

In this section, we still assume that $M$ satisfies Condition \ref{condition}. We will first review basic concepts on graphs of profinite groups and the profinite Bass-Serre theory, then we will apply the theory to $H<\pi_1(M)$ and prove that the normalizer of $\widehat{H_0}$ in $\widehat{H}$ contains $\widehat{H_0}$ as a finite index subgroup.

\subsection{Graph of profinite groups}\label{graphofprofinitegroup}

In this section, we review basic concepts on graphs of profinite groups and the profinite Bass-Serre theory. The readers can find more details on this topic in Ribes' book \cite{Rib}.

\begin{definition}\label{profinitegraph}
A {\it profinite graph} is a  quadruple $(\Gamma,V(\Gamma),d_0,d_1)$ such that the following holds:
\begin{enumerate}
\item $\Gamma$ is a nonempty profinite space (an inverse limit of finite discrete spaces).
\item $V(\Gamma)\subset \Gamma$ is a nonempty closed subset. 
\item $d_0,d_1:\Gamma\to V(\Gamma)$ are two continuous functions such that $d_0|_{V(\Gamma)}$ and $d_1|_{V(\Gamma)}$ are both identity on $V(\Gamma)$. 
\end{enumerate}
\end{definition}
The edge set of this profinite graph is $E(\Gamma)=\Gamma\setminus V(\Gamma)$, which may not be a closed subset of $\Gamma$. In the case that $\Gamma$ is a finite set, this notion of profinite graphs coincides with the notion of directed graphs in the usual sense, and we will also use the notion in Definition \ref{profinitegraph} for (directed) finite graphs.

Now we give the definition of graph of profinite groups over finite graphs. The theory of graph of profinite groups over infinite profinite graphs is more complicated, and it will not be used in this paper.

\begin{definition}\label{graphofgroupdef}
A graph of profinite group $\widehat{\mathcal{G}}$ over a finite connected graph $\Gamma$ consists of the following data, and is denoted by $(\widehat{\mathcal{G}},\Gamma)$.
\begin{enumerate}
\item For each vertex $v\in V(\Gamma)$ and edge $e\in E(\Gamma)$, there are associated profinite groups $\widehat{\mathcal{G}}^v$ (vertex group) and $\widehat{\mathcal{G}}^e$ (edge group) respectively.
\item For each edge $e\in E(\Gamma)$, there are injective homomorphisms $\partial_0^e:\widehat{\mathcal{G}}^e\to \widehat{\mathcal{G}}^{d_0(e)}$ and $\partial_1^e:\widehat{\mathcal{G}}^e\to \widehat{\mathcal{G}}^{d_1(e)}$. Here $d_0(e)$ and $d_1(e)$ are the two vertices of $\Gamma$ adjacent to $e$.
\end{enumerate}
\end{definition}

For convenience, we assume that all vertex and edge groups $\widehat{\mathcal{G}}^v$ and $\widehat{\mathcal{G}}^e$ are finitely generated, so all their finite index subgroups are open, thanks to \cite{NS}. The profinite fundamental group $\Pi_1(\widehat{\mathcal{G}},\Gamma)$ of $(\widehat{\mathcal{G}},\Gamma)$ is defined to be the profinite completion of the abstract fundamental group $\pi_1(\widehat{\mathcal{G}},\Gamma)$ of $(\widehat{\mathcal{G}},\Gamma)$ (with $\widehat{\mathcal{G}}^v$ and $\widehat{\mathcal{G}}^e$ considered as abstract groups).

 Recall that the abstract fundamental group $\pi_1(\widehat{\mathcal{G}},\Gamma)$ of $(\widehat{\mathcal{G}},\Gamma)$ is defined by the following process. One first fix a maximal spanning tree $T\subset \Gamma$, then take $t^e=1$ for each edge $e\in E(T)$ and take a stable letter $t^e$ for each edge $e\in E(\Gamma)\setminus E(T)$. Then the abstract fundamental group $\pi_1(\widehat{\mathcal{G}},\Gamma)$ is defined to be the quotient of the free product of all vertex groups and a free group by the following relations:
 $$(\Asterisk_{v\in V(\Gamma)}\widehat{\mathcal{G}}^v)\Asterisk(\Asterisk _{e\in E(\Gamma)\setminus E(T)}\mathbb{Z}\langle t^e\rangle)/\langle\langle (t^e)^{-1}\partial_0^e(g)t^e(\partial_1^e(g))^{-1}:e\in E(\Gamma),g\in \widehat{\mathcal{G}}^e\rangle\rangle.$$
 Here $\mathbb{Z}\langle t^e\rangle$ denotes an infinite cyclic group generated by $t^e$.

Given the maximal spanning tree $T\subset \Gamma$, we have natural homomorphisms $\widehat{\mathcal{G}}^v\to \Pi_1(\widehat{\mathcal{G}},\Gamma)$ and $\widehat{\mathcal{G}}^e\xrightarrow{\partial_0^e}\widehat{\mathcal{G}}^{d_0(e)}\to \Pi_1(\widehat{\mathcal{G}},\Gamma)$. In contrast with the case of abstract groups, the homomorphisms $\widehat{\mathcal{G}}^v\to \Pi_1(\widehat{\mathcal{G}},\Gamma)$ and $\widehat{\mathcal{G}}^e\to \Pi_1(\widehat{\mathcal{G}},\Gamma)$ may not be injective in general.

\bigskip

Now we work on a graph of abstract groups $(\mathcal{G},\Gamma)$. Given $(\mathcal{G},\Gamma)$, we have the following two sequences of constructions. One can first take profinite completions of vertex and edge groups of $(\mathcal{G},\Gamma)$ to get a graph of profinite groups $(\widehat{\mathcal{G}},\Gamma)$ (assuming each homomorphism $\widehat{\mathcal{G}^e}\to \widehat{\mathcal{G}^v}$ is injective), then take the profinite fundamental group $\Pi_1(\widehat{\mathcal{G}},\Gamma)$. Alternatively, one can first take the abstract fundamental group $\pi_1(\mathcal{G},\Gamma)$, then take its profinite completion $\widehat{\pi_1(\mathcal{G},\Gamma)}$. 

In general, these two sequences of constructions do not give isomorphic profinite groups. We obtain isomorphic profinite groups if the graph of abstract groups $(\mathcal{G},\Gamma)$ is {\it efficient}.
\begin{definition}\label{efficient}
A finite graph of abstract groups $(\mathcal{G},\Gamma)$ is {\it efficient} if the following holds.
\begin{enumerate}
\item The abstract fundamental group $\pi_1(\mathcal{G},\Gamma)$ is residually finite.
\item For any $m\in \Gamma$ (a vertex or an edge),  $\mathcal{G}^m$ is separable in $\pi_1(\mathcal{G},\Gamma)$.
\item For any $m\in \Gamma$ and any finite index subgroup $K< \mathcal{G}^m$, there is a finite index subgroup $N<\pi_1(\mathcal{G},\Gamma)$ such that $N\cap \mathcal{G}^m<K$.
\end{enumerate}
\end{definition}

A generalization of Exercise 9.2.7 of \cite{RZ} gives the following result, see also Theorem 5.6 of \cite{Wil}.
\begin{theorem}\label{efficientthm}
Let $(\mathcal{G},\Gamma)$ be a finite graph of abstract groups that is efficient. Then there is a natural isomorphism 
$$\widehat{\pi_1(\mathcal{G},\Gamma)}\xrightarrow{\cong}\Pi_1(\widehat{\mathcal{G}},\Gamma).$$ Moreover, for any $m\in \Gamma$, the natural homormophism $\widehat{\mathcal{G}^m}\to \Pi_1(\widehat{\mathcal{G}},\Gamma)$ is injective.
\end{theorem}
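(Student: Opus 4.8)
The plan is to identify $\Pi_1(\widehat{\mathcal{G}},\Gamma)$ with $\widehat{\pi_1(\mathcal{G},\Gamma)}$ by comparing their systems of finite quotients. Write $G=\pi_1(\mathcal{G},\Gamma)$. The canonical maps $\widehat{\mathcal{G}^v}\to \Pi_1(\widehat{\mathcal{G}},\Gamma)$ together with the stable letters $t^e$ respect the defining relations, so they assemble into a homomorphism $\eta\colon G\to \Pi_1(\widehat{\mathcal{G}},\Gamma)$ and hence into a continuous homomorphism $\widehat{\eta}\colon \widehat{G}\to \Pi_1(\widehat{\mathcal{G}},\Gamma)$; this is the natural map in the statement. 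Surjectivity of $\widehat{\eta}$ is the easy half: since each $\widehat{\mathcal{G}^v}\to\Pi_1(\widehat{\mathcal{G}},\Gamma)$ is continuous and $\mathcal{G}^v$ is dense in $\widehat{\mathcal{G}^v}$, the closure of $\eta(G)$ contains every vertex group and every $t^e$, and these topologically generate $\Pi_1(\widehat{\mathcal{G}},\Gamma)$. The whole content is therefore the injectivity of $\widehat{\eta}$, which amounts to showing that the open subgroups $\widehat{\eta}^{-1}(K)$, as $K$ ranges over open subgroups of $\Pi_1(\widehat{\mathcal{G}},\Gamma)$, are cofinal among the open subgroups of $\widehat{G}$; equivalently, that every finite-index normal subgroup $N\lhd G$ has the form $\eta^{-1}(K)$ for some open $K\lhd \Pi_1(\widehat{\mathcal{G}},\Gamma)$.

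To produce such a $K$ from a given finite-index $N\lhd G$, I would first pass to a \emph{compatible quotient graph of finite groups}. For each vertex $v$ and edge $e$ I want finite-index normal subgroups $M_v\lhd \mathcal{G}^v$ and $M_e\lhd \mathcal{G}^e$ satisfying (i) $M_v\subseteq N\cap \mathcal{G}^v$, (ii) $\partial_i^e(M_e)\subseteq M_{d_i(e)}$, and (iii) $(\partial_i^e)^{-1}(M_{d_i(e)})=M_e$ for both $i=0,1$. Condition (iii) guarantees that the induced edge maps $\mathcal{G}^e/M_e\to \mathcal{G}^{d_i(e)}/M_{d_i(e)}$ are injective, so that $(\overline{\mathcal{G}},\Gamma)$ with $\overline{\mathcal{G}}^m=\mathcal{G}^m/M_m$ is a genuine finite graph of finite groups. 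Let $L=\pi_1(\overline{\mathcal{G}},\Gamma)$ and let $p\colon G\to L$ be the induced surjection. Because the edge maps remain injective, $\ker p$ is exactly the normal closure in $G$ of $\bigcup_v M_v$; since each $M_v\subseteq N$ and $N\lhd G$, this gives $\ker p\subseteq N$, so $\overline{N}:=p(N)=N/\ker p$ is a finite-index normal subgroup of $L$ with $p^{-1}(\overline{N})=N$.

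Granting the compatible system, the argument concludes quickly. The group $L$ is the fundamental group of a finite graph of finite groups, hence virtually free and in particular residually finite, so $\overline{N}$ is closed in $\widehat{L}$. The quotient maps $\widehat{\mathcal{G}^v}\to \overline{\mathcal{G}}^v$ and $t^e\mapsto t^e$ induce a continuous homomorphism $P\colon \Pi_1(\widehat{\mathcal{G}},\Gamma)\to \widehat{L}$ with $P\circ\eta$ equal to the composite $G\xrightarrow{p} L\to\widehat{L}$. Setting $K=P^{-1}(\overline{\overline{N}})$, where $\overline{\overline{N}}$ is the (open) closure of $\overline{N}$ in $\widehat{L}$, one gets $\eta^{-1}(K)=p^{-1}(\overline{N})=N$, which is exactly what was needed; the open subgroups $\widehat{\eta}^{-1}(K)$ are thus cofinal and $\widehat{\eta}$ is injective, hence an isomorphism. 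The ``moreover'' statement is then immediate: under this isomorphism the map $\widehat{\mathcal{G}^m}\to \Pi_1(\widehat{\mathcal{G}},\Gamma)$ becomes the homomorphism on completions induced by the inclusion $\mathcal{G}^m\hookrightarrow G$, and this is injective precisely by condition (3) of Definition \ref{efficient} together with the injectivity criterion recalled in Section \ref{preliminary}.

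The main obstacle is the construction of the compatible system $\{M_v,M_e\}$: one must simultaneously refine the vertex quotients below $N$ while matching the two inclusions of each edge group so that conditions (ii) and (iii) hold. This is exactly what efficiency is designed to supply. Separability of the edge and vertex groups (condition (2)) lets one detect edge cosets in finite quotients and keep edge groups closed, while condition (3) lets one pull back any prescribed finite-index behavior on an edge group $\mathcal{G}^e$ from a finite-index subgroup of $G$, so that the refinements chosen at different vertices can be made mutually consistent across each edge. Since $\Gamma$ is finite, only finitely many such matchings are required and they can be arranged one edge at a time, while residual finiteness (condition (1)) ensures that the ambient subgroups $N$ are plentiful. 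Carrying out this finite but delicate bookkeeping, keeping all three conditions of Definition \ref{efficient} in play, is the technical heart of the argument.
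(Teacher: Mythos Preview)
The paper does not prove this statement; it is quoted without proof from Exercise~9.2.7 of \cite{RZ} and Theorem~5.6 of \cite{Wil}, so there is no in-paper argument to compare your attempt against.

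Your outline is essentially correct and follows the standard line, but you have mislocated the role of efficiency. The step you single out as ``the technical heart'' --- producing a compatible system $\{M_v,M_e\}$ --- is in fact trivial: the naive choice $M_v=N\cap\mathcal G^v$ and $M_e=N\cap\mathcal G^e$ already satisfies your conditions (i)--(iii). Indeed, normality of $N$ in $G$ together with the defining relation $(t^e)^{-1}\partial_0^e(g)\,t^e=\partial_1^e(g)$ forces $(\partial_0^e)^{-1}(N\cap\mathcal G^{d_0(e)})=(\partial_1^e)^{-1}(N\cap\mathcal G^{d_1(e)})$, so the edge maps on the finite quotients are automatically injective and no delicate bookkeeping across edges is required. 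In fact, under the paper's standing hypothesis that the vertex and edge groups are finitely generated (so that Nikolov--Segal applies and every abstract homomorphism $\widehat{\mathcal G^v}\to Q$ to a finite group is continuous), finite quotients of $G$ and of $\pi_1(\widehat{\mathcal G},\Gamma)$ are in natural bijection, and the isomorphism $\widehat G\cong\Pi_1(\widehat{\mathcal G},\Gamma)$ follows without invoking efficiency at all.

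The genuine content of the efficiency hypothesis lies where you already use it correctly: condition~(3) of Definition~\ref{efficient} is exactly the injectivity criterion from Section~\ref{preliminary} for $\widehat{\mathcal G^m}\to\widehat G$, which gives the ``moreover'' clause. It also guarantees that the edge maps $\widehat{\mathcal G^e}\to\widehat{\mathcal G^v}$ are injective, so that $(\widehat{\mathcal G},\Gamma)$ is a graph of profinite groups in the sense of Definition~\ref{graphofgroupdef} to begin with. Your invocation of conditions~(1) and~(2) as needed for the compatible-quotient construction is therefore misplaced.
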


\bigskip

In the following, for a graph of profinite groups $(\widehat{\mathcal{G}},\Gamma)$, we assume that $\widehat{\mathcal{G}}^m\to \Pi_1(\widehat{\mathcal{G}},\Gamma)$ is injective for any $m\in \Gamma$ and we still use $\widehat{\mathcal{G}}^m$ to denote the image.

For a graph of profinite groups $(\widehat{\mathcal{G}},\Gamma)$, its profinite Bass-Serre tree $\mathcal{T}_{(\widehat{\mathcal{G}},\Gamma)}$ is defined as the following. 
 
\begin{definition}\label{bassserre}
\begin{enumerate}
\item The profinite set $\mathcal{T}_{(\widehat{\mathcal{G}},\Gamma)}$ is the disjoint union of left cosets of vertex and edge groups 
$$\mathcal{T}_{(\widehat{\mathcal{G}},\Gamma)}=\bigcup_{m\in \Gamma} \Pi_1(\widehat{\mathcal{G}},\Gamma)/\widehat{\mathcal{G}}^m,$$
and the vertex set $V(\mathcal{T}_{(\widehat{\mathcal{G}},\Gamma)})$ is the disjoint union of left cosets of vertex groups 
$$V(\mathcal{T}_{(\widehat{\mathcal{G}},\Gamma)})=\bigcup_{v\in V(\Gamma)} \Pi_1(\widehat{\mathcal{G}},\Gamma)/\widehat{\mathcal{G}}^v.$$
\item We only need to define homomorphisms $d_0,d_1:\mathcal{T}_{(\widehat{\mathcal{G}},\Gamma)}\to V(\mathcal{T}_{(\widehat{\mathcal{G}},\Gamma)})$ on the edge set $E(\mathcal{T}_{(\widehat{\mathcal{G}},\Gamma)})$ (which is closed in $\mathcal{T}_{(\widehat{\mathcal{G}},\Gamma)}$). For any edge $e\in \Gamma$, we define $$d_0(g\widehat{\mathcal{G}}^e)=g\widehat{\mathcal{G}}^{d_0(e)},\ d_1(g\widehat{\mathcal{G}}^e)=gt^e\widehat{\mathcal{G}}^{d_1(e)}.$$
\end{enumerate}
\end{definition}
The profinite graph $\mathcal{T}_{(\widehat{\mathcal{G}},\Gamma)}$ is actually a profinite tree. The readers can find the definition of profinite trees in Section 2.4 of \cite{Rib}, and can find the proof that $\mathcal{T}_{(\widehat{\mathcal{G}},\Gamma)}$ is a profinite tree in Section 6.3 of \cite{Rib}.

We also have a natural $\Pi_1(\widehat{\mathcal{G}},\Gamma)$-action on the profinite Bass-Serre tree $\mathcal{T}_{(\widehat{\mathcal{G}},\Gamma)}$, which is defined by $g(h\widehat{\mathcal{G}}^m)=(gh)\widehat{\mathcal{G}}^m$, for any $g,h\in\Pi_1(\widehat{\mathcal{G}},\Gamma)$ and $m\in \Gamma$. It is obvious that the stabilizer of any element $g\widehat{\mathcal{G}}^m\in \mathcal{T}_{(\widehat{\mathcal{G}},\Gamma)}$ is $g\widehat{\mathcal{G}}^mg^{-1}< \Pi_1(\widehat{\mathcal{G}},\Gamma)$.

\bigskip

Now we give an application of the profinite Bass-Serre theory. This Lemma is useful for proving Theorem \ref{main} for reducible $3$-manifolds.

\begin{lemma}\label{normalizerfreeproduct}
Let $\widehat{G}_1,\cdots,\widehat{G}_k$ be profinite groups, let $\Pi_{i=1}^k\widehat{G}_i$ be their profinite free product, and let $\widehat{H}< \widehat{G}_1$ be a nontrivial closed subgroup. Then the normalizer of $\widehat{H}$ in $\Pi_{i=1}^k\widehat{G}_i$ equals the normalizer of $\widehat{H}$ in $\widehat{G}_1$.
\end{lemma}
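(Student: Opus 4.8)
The plan is to present $\Pi_{i=1}^k\widehat{G}_i$ as the profinite fundamental group of a graph of profinite groups with \emph{trivial edge groups}, and then to run the exact Bass-Serre argument used for condition (4) in Proposition \ref{nonseparable}, now over the profinite Bass-Serre tree of Definition \ref{bassserre}. Concretely, I would take $\Gamma$ to be the star with a central vertex $v_0$ carrying the trivial group, leaf vertices $v_1,\dots,v_k$ carrying $\widehat{G}_1,\dots,\widehat{G}_k$, and edges $e_1,\dots,e_k$ (with $e_i$ joining $v_0$ to $v_i$) all carrying the trivial group. By the standard identification of a profinite free product with such a fundamental group (\cite{Rib}), $\Pi_1(\widehat{\mathcal{G}},\Gamma)=\Pi_{i=1}^k\widehat{G}_i$, and this group acts on the associated profinite Bass-Serre tree $\mathcal{T}$. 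By the stabilizer description following Definition \ref{bassserre}, the vertex $v:=1\cdot\widehat{G}_1$ has stabilizer exactly $\widehat{G}_1$, while every edge of $\mathcal{T}$ has trivial stabilizer, being a conjugate of a trivial edge group.

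The inclusion $N_{\widehat{G}_1}(\widehat{H})\subseteq N_{\Pi_{i=1}^k\widehat{G}_i}(\widehat{H})$ is immediate, so the content is the reverse inclusion. I would take $g\in \Pi_{i=1}^k\widehat{G}_i$ normalizing $\widehat{H}$ and first record that $\widehat{H}$ fixes two vertices of $\mathcal{T}$: it fixes $v$ because $\widehat{H}<\widehat{G}_1=\mathrm{stab}(v)$, and it fixes $gv$ because for every $h\in\widehat{H}$ we have $h(gv)=g(g^{-1}hg)v=gv$, using $g^{-1}hg\in\widehat{H}$. The goal is then to deduce $gv=v$, since that places $g$ in $\mathrm{stab}(v)=\widehat{G}_1$ and hence in $N_{\widehat{G}_1}(\widehat{H})$, which finishes the argument.

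To force $gv=v$, I would argue by contradiction exactly as in Proposition \ref{nonseparable}(4): if $gv\neq v$, then $\widehat{H}$ fixes two distinct vertices, hence should fix an edge of $\mathcal{T}$, hence be contained in a trivial edge stabilizer, contradicting $\widehat{H}\neq 1$. The step that requires genuine profinite-tree input --- and the one I expect to be the main obstacle --- is precisely the passage ``$\widehat{H}$ fixes two distinct vertices $\Rightarrow \widehat{H}$ fixes an edge.'' In an abstract tree this is trivial (take the first edge of the geodesic), but a profinite tree has no literal geodesics with a well-defined first edge. I would route this through the fixed-point theory for profinite trees in \cite{Rib}: the fixed-point set $\mathcal{T}^{\widehat{H}}$ of a closed subgroup is a nonempty profinite subtree, it contains the smallest subtree $[v,gv]$ joining $v$ and $gv$, and a profinite subtree containing two distinct vertices must contain an edge. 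Once this input is in place, the computation is a faithful profinite translation of the abstract normalizer argument and the lemma follows.
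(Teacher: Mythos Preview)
Your proposal is correct and is essentially the same argument as the paper's: the paper also realizes $\Pi_{i=1}^k\widehat{G}_i$ as a profinite fundamental group of a graph of profinite groups with trivial edge groups, and then runs the profinite Bass-Serre argument that if $g\hat v\neq\hat v$ then $\widehat{H}$ stabilizes the minimal subtree $[\hat v,g\hat v]$ (Theorem~4.1.5 of \cite{Rib}) and hence an edge, contradicting triviality of edge stabilizers. The only cosmetic difference is that the paper uses a chain graph on $k$ vertices (with $\widehat{G}_i$ at vertex $v_i$ and $k-1$ trivial edge groups) rather than your star with a trivial central vertex; both yield the same profinite free product and the argument is identical from that point on.
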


\begin{proof}
We take a finite connected graph $\Gamma$ that is a chain of $k$ vertices $v_1,\cdots,v_k$ and $k-1$ edges $e_1,\cdots,e_{k-1}$. For any vertex $v_i$, we take $\widehat{\mathcal{G}}^{v_i}=\widehat{G}_i$; for any edge $e_j$, we take $\widehat{\mathcal{G}}^{e_j}$ to be the trivial group. Then each homomorphism $\widehat{G}^e\to \widehat{G}^v$ is injective since all edge groups are trivial.

Then the profinite free product $\Pi_{i=1}^k\widehat{G}_i$ is isomorphic to the profinite fundamental group of $(\widehat{\mathcal{G}},\Gamma)$, and each $\widehat{G}_i$ injects into $\Pi_{i=1}^k\widehat{G}_i$, by Proposition 5.1.6 of \cite{Rib}. Then $\Pi_{i=1}^k\widehat{G}_i$ acts on the profinite Bass-Serre tree $\mathcal{T}_{(\widehat{\mathcal{G}},\Gamma)}$.

Since $\widehat{H}<\widehat{G}_1$, $\widehat{H}$ lies in the stabilizer of the vertex $\hat{v}=\widehat{G}_1\in \mathcal{T}_{(\widehat{\mathcal{G}},\Gamma)}$. For any $g\in \Pi_{i=1}^k\widehat{G}_i$ that normalizes $\widehat{H}$, we have $g^{-1}\widehat{H}g=\widehat{H}$. By considering the action of $\Pi_{i=1}^k\widehat{G}_i$ on $\mathcal{T}_{(\widehat{\mathcal{G}},\Gamma)}$, $\widehat{H}$ also lies in the stabilizer of $g(\hat{v}) \in \mathcal{T}_{(\widehat{\mathcal{G}},\Gamma)}$.

We suppose that $g(\hat{v}) \ne \hat{v} $. Since $\widehat{H}$ stabilizes both $\hat{v} $ and $g(\hat{v}) $, it stabilizes the minimal subtree $[\hat{v},g(\hat{v}) ]$ of $\mathcal{T}_{(\widehat{\mathcal{G}},\Gamma)}$ spanned by $\hat{v} $ and $g(\hat{v}) $ (Theorem 4.1.5 of \cite{Rib}).  Since $\hat{v} $ and $g(\hat{v}) $ are two different vertices of $\mathcal{T}_{(\widehat{\mathcal{G}},\Gamma)}$, the subtree $[\hat{v} ,g(\hat{v}) ]$ contains an edge $\hat{e}$ (by the definition of profinite trees), and $\widehat{H}$ stabilizes $\hat{e}$. However, this is impossible, since the stabilizer of any edge in $\mathcal{T}_{(\widehat{\mathcal{G}},\Gamma)}$ is the trivial group, while $\widehat{H}$ is nontrivial.

Then we must have $\widehat{G}_1=\hat{v} =g(\hat{v}) =g\widehat{G}_1$, thus $g\in \widehat{G}_1$. So the normalizer of $\widehat{H}$ in $\Pi_{i=1}^k\widehat{G}_i$ is contained in the normalizer of $\widehat{H}$ in $\widehat{G}_1$, and they must be equal to each other.
\end{proof}

\subsection{Normalizer of $\widehat{H_0}$ in $\widehat{H}$}

For a finitely generated subgroup of a $3$-manifold group $H<\pi_1(M)$, we first prove that the graph of group structure on $H<\pi_1(M)$ is efficient. Then we prove that, for any nonabelian subgroup $H_0<H^v$ of a vertex group $H^v<H$,  the normalizer of $\widehat{H_0}$ in $\widehat{H}$ is contained in $\widehat{H^v}$.

We first prove that the graph of group structure on $H$ is efficient, thus this graph of group structure on $H$ behaves nicely when passing to the profinite completion.

\begin{proposition}\label{checkefficient}
Let $M$ be a $3$-manifold satisfying Condition \ref{condition}, and let $H<\pi_1(M)$ be a finitely generated subgroup. Then the graph of group structure on $H$ is efficient.
\end{proposition}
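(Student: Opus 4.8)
The plan is to verify directly the three conditions in Definition \ref{efficient} for the graph of group structure $(\mathcal{G},\Gamma_H)$ on $H$, whose abstract fundamental group is $\pi_1(\mathcal{G},\Gamma_H)=H$. The crucial structural feature I will exploit is that every vertex group $H^v=\pi_1(M_H^v)$ is contained in a conjugate of a piece subgroup $\pi_1(M^v)<\pi_1(M)$ (since $M_H^v$ covers the piece $M^v$), and every edge group $H^e$ is a subgroup of a peripheral torus group $\pi_1(T^e)<\pi_1(M^v)$, hence also lies in a conjugate of a piece subgroup. Thus every vertex and edge group, and indeed every finite index subgroup of one, is a finitely generated subgroup of a single piece subgroup $\pi_1(M^v)$, which puts us in position to invoke Lemma \ref{simpleseparable}. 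I will also use the elementary observation that separability is inherited by intermediate subgroups: if $A<H<\pi_1(M)$ and $A$ is separable in $\pi_1(M)$, then $A$ is separable in $H$, since one may simply restrict to $H$ any finite quotient of $\pi_1(M)$ that separates a given element from $A$.

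With these two tools, Conditions (1) and (2) are immediate. Condition (1) holds because $H<\pi_1(M)$ and $\pi_1(M)$ is residually finite (by Hempel and the geometrization), so $H$ is residually finite. For Condition (2), each vertex group $H^v$ and each edge group $H^e$ is a finitely generated subgroup of a piece subgroup, hence separable in $\pi_1(M)$ by Lemma \ref{simpleseparable} (separability being invariant under conjugation), and therefore separable in $H$ by the inheritance observation.

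The remaining Condition (3) carries the real content, since it asserts that the profinite topology of $H$ induces the full profinite topology on each vertex and edge group. I would reduce it to separability of finite index subgroups by the following coset argument. Fix $m\in\Gamma_H$, write $A=\mathcal{G}^m$, and let $K<A$ have finite index with coset representatives $1=a_1,a_2,\dots,a_n$. It suffices to show that $K$ is separable in $H$: for each $a_i\in A\setminus K$ choose a finite index normal subgroup $N_i\lhd H$ with $a_i\notin KN_i$, and set $N=\bigcap_{i\ge 2}N_i$; then $A\cap N<K$, for if some $x\in A\cap N$ lay outside $K$ it would satisfy $x\in Ka_i$ for some $i\ge2$, forcing $a_i\in KN$, a contradiction. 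Hence Condition (3) follows once each finite index subgroup $K$ of $\mathcal{G}^m$ is known to be separable in $H$; but such a $K$ is again a finitely generated subgroup of a piece subgroup $\pi_1(M^v)$, so Lemma \ref{simpleseparable} makes it separable in $\pi_1(M)$ and hence in $H$.

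The proof therefore has essentially no hard step of its own: its entire weight rests on Lemma \ref{simpleseparable}, together with the bookkeeping that every vertex and edge group of the induced decomposition---and every finite index subgroup thereof---sits inside a single piece subgroup of $\pi_1(M)$. The point requiring the most care is precisely this bookkeeping for Condition (3): I must ensure that the finite index subgroups of $\mathcal{G}^m$ are finitely generated (automatic, as $\mathcal{G}^m$ is finitely generated) and remain contained in a conjugate of some $\pi_1(M^v)$, so that Lemma \ref{simpleseparable} applies verbatim.
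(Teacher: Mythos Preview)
Your proposal is correct and follows essentially the same approach as the paper's own proof: verify the three efficiency conditions by noting that every vertex and edge group (and every finite index subgroup thereof) lies in a conjugate of a piece subgroup $\pi_1(M^v)$, apply Lemma~\ref{simpleseparable} to get separability in $\pi_1(M)$ and hence in $H$, and deduce Condition~(3) from separability of $K$ in $H$ via a coset-representative argument. The only cosmetic difference is that the paper chooses a single finite index $N<H$ with $K<N$ and $h_1,\dots,h_k\notin N$, whereas you intersect normal $N_i$'s with $a_i\notin KN_i$; both yield $N\cap \mathcal{G}^m<K$ by the same reasoning.
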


\begin{proof}
We need to check the three conditions in Definition \ref{efficient}.

Condition (1) follows from the fact that all finitely generated $3$-manifold groups are residually finite, by \cite{Hem}.

Since each vertex or edge group $H^m$ is contained in a vertex group of $\pi_1(M)$, it is separable in $\pi_1(M)$, by Lemma \ref{simpleseparable}. So each $H^m$ is separable in $H$, by a simple algebraic argument, thus condition (2) holds.

Let $K<H^m$ be a finite index subgroup. Then $K$ is contained in a vertex subgroup of $\pi_1(M)$, so it is separable in $H$, by the argument for Condition (2). We take a finite left transversal $h_0=e,h_1,\cdots,h_k$ of $K$ in $H^m$. Then since $K$ is separable in $H$, there is a finite index subgroup $N<H$ such that $K<N$, and $h_1,\cdots,h_k\notin N$. Then we must have $N\cap H^m=K$, thus Condition (3) holds.
\end{proof}


For a $3$-manifold $M$, a finitely generated subgroup $H<\pi_1(M)$ and a nonabelian subgroup $H_0<H^v$ of a vertex group $H^v<H$, we prove that the normalizer of $\widehat{H_0}$ in $\widehat{H}$ is contained in $\widehat{H^v}$. The proof is parallel to the proof of Proposition \ref{nonseparable} (3).

\begin{proposition}\label{nonnormalizer}
Let $M$ be a $3$-manifold satisfying Condition \ref{condition}, and let $H<\pi_1(M)$ be a finitely generated subgroup.
Let $H_0<H^v$ be a finitely generated nonabelian subgroup of a vertex group $H^v<H$. Then the inclusion induced homomorphisms $\widehat{H_0}\to \widehat{H}$ and $\widehat{H^v}\to \widehat{H}$ are both injective. By
denoting the images of these two embeddings by $\widehat{H_0}$ and $\widehat{H^v}$ respectively, the normalizer of $\widehat{H_0}$ in $\widehat{H}$ satisfies $N_{\widehat{H}}(\widehat{H_0})<\widehat{H^v}$.
\end{proposition}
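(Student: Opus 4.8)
The plan is to run the profinite analogue of the Bass--Serre tree argument used for Proposition \ref{nonseparable}(4), with the same mechanism already exploited in Lemma \ref{normalizerfreeproduct}. First I would dispose of the two injectivity claims. By Proposition \ref{checkefficient} the graph of group structure on $H$ is efficient, so Theorem \ref{efficientthm} identifies $\widehat{H}$ with the profinite fundamental group $\Pi_1(\widehat{\mathcal{G}},\Gamma_H)$ and guarantees that every vertex group injects; in particular $\widehat{H^v}\to\widehat{H}$ is injective. For $\widehat{H_0}\to\widehat{H}$ I would repeat the argument for Proposition \ref{checkefficient}(3): any finite index subgroup $K<H_0$ lies inside the vertex group $\pi_1(M^v)$ of $\pi_1(M)$ containing $H^v$, hence is separable in $\pi_1(M)$ by Lemma \ref{simpleseparable} and therefore separable in $H$; choosing a finite transversal of $K$ in $H_0$ then yields a finite index $N<H$ with $N\cap H_0=K$, which is exactly the criterion for $\widehat{H_0}\to\widehat{H}$ to be injective. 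From here on I identify $\widehat{H_0}$ and $\widehat{H^v}$ with their images in $\widehat{H}$.

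For the normalizer statement I would let $\widehat{H}=\Pi_1(\widehat{\mathcal{G}},\Gamma_H)$ act on its profinite Bass--Serre tree $\mathcal{T}_{(\widehat{\mathcal{G}},\Gamma_H)}$ as in Definition \ref{bassserre}. The vertex $\hat v=\widehat{\mathcal{G}}^v$ has stabilizer exactly $\widehat{H^v}$, so $\widehat{H_0}<\widehat{H^v}$ fixes $\hat v$. Given $g\in N_{\widehat{H}}(\widehat{H_0})$, the relation $\widehat{H_0}=g\widehat{H_0}g^{-1}$ shows that $\widehat{H_0}$ also fixes $g(\hat v)$. Suppose $g(\hat v)\ne\hat v$. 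Then $\widehat{H_0}$ stabilizes the minimal subtree $[\hat v,g(\hat v)]$ spanned by these two distinct vertices (Theorem 4.1.5 of \cite{Rib}), and since they are distinct this subtree contains an edge $\hat e$, exactly as in Lemma \ref{normalizerfreeproduct}. Hence $\widehat{H_0}$ lies in the stabilizer of $\hat e$, which is a conjugate of some edge group $\widehat{\mathcal{G}}^e=\widehat{H^e}$. Because every edge group $H^e$ embeds in $\pi_1(T^2)\cong\mathbb{Z}^2$, its completion $\widehat{H^e}$ is abelian, and therefore $\widehat{H_0}$ would have to be abelian.

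To close the contradiction I would observe that $H_0$ is finitely generated, nonabelian and residually finite: choosing noncommuting $a,b\in H_0$ and separating $[a,b]\ne 1$ in a finite quotient shows that the images of $a$ and $b$ in $\widehat{H_0}$ do not commute, so $\widehat{H_0}$ is nonabelian, contradicting the previous paragraph. Thus $g(\hat v)=\hat v$, so $g$ fixes $\hat v$ and lies in its stabilizer $\widehat{H^v}$, giving $N_{\widehat{H}}(\widehat{H_0})<\widehat{H^v}$.

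I expect the only genuine input, as opposed to bookkeeping, to be the profinite tree step: that two distinct vertices of the profinite tree $\mathcal{T}_{(\widehat{\mathcal{G}},\Gamma_H)}$ are joined by a minimal subtree necessarily containing an edge, and that this subtree is preserved by any closed subgroup fixing both endpoints. This is the profinite substitute for the elementary "geodesic between two vertices" argument of Proposition \ref{nonseparable}(4), and is precisely the tool already used in Lemma \ref{normalizerfreeproduct}; once it is in place, the abelianness of edge stabilizers and the nonabelianness of $\widehat{H_0}$ make the rest routine.
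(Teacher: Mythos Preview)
Your proposal is correct and follows essentially the same route as the paper's own proof: establish the injectivity of $\widehat{H_0}\to\widehat{H}$ and $\widehat{H^v}\to\widehat{H}$ via separability (Lemma \ref{simpleseparable} and the argument of Proposition \ref{checkefficient}), then identify $\widehat{H}$ with $\Pi_1(\widehat{\mathcal{H}},\Gamma_H)$ via efficiency and Theorem \ref{efficientthm}, and run the profinite Bass--Serre tree argument exactly as in Lemma \ref{normalizerfreeproduct}, using that edge stabilizers are abelian while $\widehat{H_0}$ is not. The paper presents the same argument slightly more tersely---it does not re-cite Theorem 4.1.5 of \cite{Rib} here and simply asserts that $\widehat{H_0}$ is nonabelian---but your added justification of the latter via residual finiteness is harmless and correct.
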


\begin{proof}
By Lemma \ref{simpleseparable} and the proof of Proposition \ref{checkefficient}, the inclusions $H_0\to H$ and $H^v\to H$ satisfy the injectivity criterion in Section \ref{profinitepreliminary}, so both $\widehat{H_0}\to \widehat{H}$ and $\widehat{H^v}\to \widehat{H}$ are injective.

At first, the graph of group structure $(\mathcal{H},\Gamma)$ on $H$ gives rise to a graph of profinite groups $(\widehat{\mathcal{H}},\Gamma)$ over the same finite graph $\Gamma$. Here each vertex (edge) group of $(\widehat{\mathcal{H}},\Gamma)$ is the profinite completion of the corresponding vertex (edge) group of $(\mathcal{H},\Gamma)$.

By Theorem \ref{efficientthm} and Proposition \ref{checkefficient}, $\widehat{H}$ is isomorphic to the profinite fundamental group $\Pi_1(\widehat{\mathcal{H}},\Gamma)$ of the graph of profinite group $(\widehat{\mathcal{H}},\Gamma)$. So $\widehat{H}$ acts on the profinite Bass-Serre tree $\mathcal{T}_{(\widehat{\mathcal{H}},\Gamma)}$, such that each vertex (edge) stabilizer is conjugate to a vertex (edge) subgroup of $(\widehat{\mathcal{H}},\Gamma)$. In particular, $\widehat{H_0}<\widehat{H^v}$ stabilizes the vertex $\hat{v}=\widehat{H^v}\in \mathcal{T}_{(\widehat{\mathcal{H}},\Gamma)}$.

For any $h\in N_{\widehat{H}}(\widehat{H_0})$, we have $h^{-1}\widehat{H_0}h=\widehat{H_0}$. By considering the action of $\widehat{H}$ on $\mathcal{T}_{(\widehat{\mathcal{H}},\Gamma)}$, $\widehat{H_0}$ also stabilizes the vertex $h(\hat{v})\in \mathcal{T}_{(\widehat{\mathcal{H}},\Gamma)}$. 

We suppose that $\hat{v}$ and $h(\hat{v})$ are two different vertices of $\mathcal{T}_{(\widehat{\mathcal{H}},\Gamma)}$. Then they span a minimal subtree $[\hat{v},h(\hat{v})]$ in $\mathcal{T}_{(\widehat{\mathcal{H}},\Gamma)}$, and $\widehat{H_0}$  stabilizes this subtree.
Since the subtree $[\hat{v},h(\hat{v})]$ is not trivial, it contains an edge $\hat{e}\in \mathcal{T}_{(\widehat{\mathcal{H}},\Gamma)}$. Then $\widehat{H_0}$ is contained in the stabilizer of $\hat{e}$. However, it is impossible since any edge group $\widehat{H^e}$ is abelian (the profinite completion of a subgroup of $\mathbb{Z}^2$) but  $\widehat{H_0}$ is not abelian.

So we must have $\widehat{H^v}=\hat{v}=h(\hat{v})=h\widehat{H^v}$, thus $h\in \widehat{H^v}$. So $N_{\widehat{H}}(\widehat{H_0})<\widehat{H^v}$ holds.

\end{proof}

\bigskip
\bigskip

\section{Proof of the Grothendieck rigidity}\label{proof}

We will prove Theorem \ref{main} in this section. To prove Theorem \ref{main}, we first prove the following proposition, which covers the essential case for Theorem \ref{main}.

\begin{proposition}\label{irreducible}
Let $M$ be a $3$-manifold satisfying Condition \ref{condition}, with $G=\pi_1(M)$. Then $G$ is Grothendieck rigid.
\end{proposition}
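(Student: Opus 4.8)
The plan is to combine the algebraic observation (Lemma \ref{observation}), the structural input from Proposition \ref{nonseparable}, and the profinite normalizer calculation of Proposition \ref{nonnormalizer} into a single dichotomy argument. So let $H<G=\pi_1(M)$ be an arbitrary finitely generated proper subgroup; I want to show $(G,H)$ is not a Grothendieck pair. If $H$ is separable in $G$, then by Lemma \ref{observation} we are immediately done. Hence the only remaining case, and the whole content of the proof, is when $H$ is a finitely generated \emph{nonseparable} proper subgroup. The strategy is to show that in this case the normalizer behavior of a carefully chosen subgroup $H_0$ differs between the abstract and profinite worlds in a way that obstructs $\widehat{H}\to\widehat{G}$ from being an isomorphism.

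For the nonseparable case, I would invoke Proposition \ref{nonseparable} to produce a vertex subgroup $H^v<H$ and a subgroup $H_0<H^v$ that is a non-abelian free group (hence finitely generated and nonabelian), with $N_H(H_0)=H^v$ (so $[N_H(H_0):H_0]\le[H^v:H_0]\le 2<\infty$) while $[N_G(H_0):H_0]=\infty$. The heart of the argument is to transport this contrast to the profinite completions. First I would apply Proposition \ref{nonnormalizer} (which uses that $H_0$ is finitely generated and nonabelian, and that the graph of group structure on $H$ is efficient by Proposition \ref{checkefficient}) to conclude that $\widehat{H_0}\to\widehat{H}$ and $\widehat{H^v}\to\widehat{H}$ are injective and that $N_{\widehat{H}}(\widehat{H_0})<\widehat{H^v}$. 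Since $[H^v:H_0]<\infty$, the closure $\widehat{H^v}$ contains $\widehat{H_0}$ with finite index (finite index subgroups correspond under completion by the fundamental correspondence recalled in Section \ref{profinitepreliminary}), so $[N_{\widehat{H}}(\widehat{H_0}):\widehat{H_0}]<\infty$. The upshot is that inside $\widehat{H}$, the closed subgroup $\widehat{H_0}$ has finite-index normalizer.

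The contradiction is obtained by assuming $\widehat{i}:\widehat{H}\to\widehat{G}$ is an isomorphism and comparing this with the normalizer of $\widehat{H_0}$ computed inside $\widehat{G}$. Because $[N_G(H_0):H_0]=\infty$, one expects $N_{\widehat{G}}(\widehat{H_0})$ to contain $\widehat{H_0}$ with infinite index. If $\widehat{i}$ were an isomorphism, then it would identify $\widehat{H_0}$ (a closed subgroup of $\widehat{H}$) with its image in $\widehat{G}$, and would carry $N_{\widehat{H}}(\widehat{H_0})$ onto $N_{\widehat{G}}(\widehat{H_0})$ (normalizers are preserved by group isomorphisms, and closure/continuity makes this compatible with the topology), forcing $[N_{\widehat{G}}(\widehat{H_0}):\widehat{H_0}]<\infty$. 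This would contradict the infinite index coming from $[N_G(H_0):H_0]=\infty$.

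I expect the main obstacle to be precisely this last step: passing from the abstract statement $[N_G(H_0):H_0]=\infty$ to a genuinely infinite index $[N_{\widehat{G}}(\widehat{H_0}):\widehat{H_0}]=\infty$ in the profinite completion. Infinite index abstractly does not automatically give infinite index after completion, so one cannot argue purely formally. To handle this I would use the explicit witnesses to the infinite normalizer produced in the proof of Proposition \ref{nonseparable}: in the hyperbolic-piece case the fundamental group of a fiber bundle $\pi_1(\widetilde{N^v})$, and in the Seifert case the central fiber subgroup $\cong\mathbb{Z}$ commuting with $H_0$. These give a concrete group $K<N_G(H_0)$ with $K$ containing $H_0$ to infinite index (e.g. via an element of infinite order acting nontrivially on the coset space, such as a pseudo-Anosov monodromy or a regular fiber). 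The task is then to show that the closure of such a $K$ in $\widehat{G}$ still normalizes $\widehat{H_0}$ and still contains $\widehat{H_0}$ to infinite index, which one can extract from the separability properties already established (Lemma \ref{simpleseparable}) together with the profinite Bass-Serre machinery of Section \ref{graphofprofinitegroup} applied now to the ambient graph of groups on $G=\pi_1(M)$, paralleling the argument of Proposition \ref{nonnormalizer} but in $\widehat{G}$ rather than $\widehat{H}$.
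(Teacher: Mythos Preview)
Your overall architecture---the separable/nonseparable dichotomy, the invocation of Proposition~\ref{nonseparable} to produce $H_0<H^v$, and the use of Proposition~\ref{nonnormalizer} to get $[N_{\widehat{H}}(\widehat{H_0}):\widehat{H_0}]<\infty$---is exactly the paper's approach, and the contradiction mechanism (an isomorphism $\widehat{i}$ would force the two normalizer indices to agree) is the same.

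Where you diverge is in the step you correctly flag as the crux: deducing $[N_{\widehat{G}}(\widehat{i}(\widehat{H_0})):\widehat{i}(\widehat{H_0})]=\infty$ from $[N_G(H_0):H_0]=\infty$. You propose going back to the explicit witnesses (bundle monodromy, Seifert fiber) and running profinite Bass--Serre theory on the ambient graph of groups of $G$. That machinery is unnecessary here, and in fact Bass--Serre on $G$ would only localize the normalizer to a vertex group closure without by itself producing the infinite index; you would still need the separability input to finish. The paper's argument is a two-line density/separability computation using Proposition~\ref{nonseparable}(3) directly: since $H_0$ is separable in $G$, the closure satisfies $\widehat{i}(\widehat{H_0})\cap G=H_0$; since $H_0$ is dense in $\widehat{i}(\widehat{H_0})$ and conjugation is continuous, every $g\in N_G(H_0)$ normalizes $\widehat{i}(\widehat{H_0})$, i.e.\ $N_G(H_0)<N_{\widehat{G}}(\widehat{i}(\widehat{H_0}))$. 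Intersecting the chain $\widehat{i}(\widehat{H_0})<N_{\widehat{G}}(\widehat{i}(\widehat{H_0}))$ with $G$ then gives
\[
[N_{\widehat{G}}(\widehat{i}(\widehat{H_0})):\widehat{i}(\widehat{H_0})]\ \ge\ [N_{\widehat{G}}(\widehat{i}(\widehat{H_0}))\cap G:\widehat{i}(\widehat{H_0})\cap G]\ \ge\ [N_G(H_0):H_0]=\infty.
\]
So the ``obstacle'' you anticipate dissolves once you use item~(3) of Proposition~\ref{nonseparable} (separability of $H_0$ in $G$) rather than reaching for the geometric witnesses or a second Bass--Serre argument.
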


\begin{proof}
Let $H< G$ be a finitely generated proper subgroup of $G$. We need to prove that the inclusion induced homomorphism $\hat{i}:\widehat{H}\to \widehat{G}$ is not an isomorphism.

If $H$ is separable in $G$, then by Lemma \ref{observation}, the inclusion induced homomorphism $\widehat{H}\to \widehat{G}$ is not an isomorphism.

If $H$ is not separable in $G$, then by Proposition \ref{nonseparable}, there exists a finite index (index $1$ or $2$) non-abelian subgroup $H_0<H^v$ of a vertex group $H^v<H$, such that $N_H(H_0)=H^v$ and $[N_G(H_0):H_0]=\infty$. Then by Proposition \ref{nonnormalizer}, we have $N_{\widehat{H}}(\widehat{H_0})<\widehat{H^v}$, and 
$$[N_{\widehat{H}}(\widehat{H_0}):\widehat{H_0}]\leq [\widehat{H^v}:\widehat{H_0}]=[H^v:H_0]<\infty.$$

For the inclusion homomorphisms 
$$H_0\to H^v\to H\xrightarrow{i}G,$$ 
we have induced homomorphisms
$$\widehat{H_0}\to\widehat{H^v}\to\widehat{H}\xrightarrow{\widehat{i}}\widehat{G}.$$
By Proposition \ref{nonnormalizer}, the first two homomorphisms on profinite completions are injective, so we can consider $\widehat{H_0}$ and $\widehat{H^v}$ as subgroups of $\widehat{H}$.

By Proposition \ref{nonseparable} (3), $H_0< G$ is separable, so $\widehat{i}(\widehat{H_0})\cap G=H_0$ holds. By Proposition \ref{nonseparable} (5), we have $[N_G(H_0):H_0]=\infty$. Since $H_0$ is dense in $\widehat{i}(\widehat{H_0})<\widehat{G}$, $N_G(H_0)<N_{\widehat{G}}(\widehat{i}(\widehat{H_0}))$ holds.
So we have 
$$[N_{\widehat{G}}(\widehat{i}(\widehat{H_0})):\widehat{i}(\widehat{H_0})]\geq [N_{\widehat{G}}(\widehat{i}(\widehat{H_0}))\cap G:\widehat{i}(\widehat{H_0})\cap G]\geq [N_G(H_0):H_0]=\infty.$$


Suppose that $\widehat{i}:\widehat{H}\to \widehat{G}$ is an isomorphism. Then we have 
$$\infty=[N_{\widehat{G}}(\widehat{i}(\widehat{H_0})):\widehat{i}(\widehat{H_0})]=[N_{\widehat{i}(\widehat{H})}(\widehat{i}(\widehat{H_0})):\widehat{i}(\widehat{H_0})]=[N_{\widehat{H}}(\widehat{H_0}):\widehat{H_0}]<\infty.$$

It is impossible, so $\widehat{i}:\widehat{H}\to \widehat{G}$ is not an isomorphism.

\end{proof}

Proposition \ref{irreducible} covers the essential case of Theorem \ref{main}, and we are ready to prove Theorem \ref{main} now.

\begin{proof}[Proof of Theorem \ref{main}]
{\bf Step I.} We suppose that $M$ is compact, orientable, irreducible and $\partial$-irreducible.

If $M$ has trivial torus decomposition, then $M$ is either a Seifert manifold, or a (possibly infinite volume) hyperbolic $3$-manifold. By \cite{Sco2} and \cite{Agol} respectively, $\pi_1(M)$ is LERF, so $\pi_1(M)$ is Grothendieck rigid.

If $M$ supports the Sol geometry, then $\pi_1(M)$ is Grothendieck rigid, since $\pi_1(M)$ is LERF.

If $M$ has nontrivial torus decomposition and does not support the Sol geometry, then $M$ has a double cover $M'\to M$ that either satisfies Condition \ref{condition} or supports the Sol geometry. To get such a double cover, for each piece of $M$ homeomorphic to the twisted $I$-bundle over Klein bottle, we take its double cover homeomorphic to $T^2\times I$; for any other piece, we take two copies of the same piece. Then we can paste all these pieces together to get a desired double cover $M'\to M$. By Proposition \ref{irreducible} and the case for Sol manifolds, $\pi_1(M')$ is Grothendieck rigid. Then Lemma \ref{finiteindex} implies that $\pi_1(M)$ is Grothendieck rigid.

The proof of Step I is done.

\bigskip

{\bf Step II.} We suppose that $M$ is compact and orientable. 

We take the sphere-disc decomposition of $M$, then $\pi_1(M)$ is a free product of groups of compact, orientable, irreducible, $\partial$-irreducible $3$-manifolds $G_1=\pi_1(M_1),$ $\cdots,G_n=\pi_1(M_n)$ and a free group $F_r$. 

By the argument as in Step I, we can take a double cover $M'\to M$, if necessary, such that no piece of $M_i$ is homeomorphic to the twisted $I$-bundle over Klein bottle. Then by Lemma \ref{finiteindex}, it suffices to prove that $\pi_1(M')$ is Grothendieck rigid. By abusing notation, we still use $M$ to denote this double cover.

By the Kurosh subgroup theorem, for any finitely generated subgroup $H<\pi_1(M)\cong (*_{i=1}^nG_i)*F_r$, it has an induced free product structure $H=(*_{j=1}^mH_j)*F_s$. Here each $H_j$ is a nontrivial finitely generated group, and it equals $H\cap g_jG_{i_j}g_j^{-1}$ for some $g_j\in G$ and $i_j\in \{1,\cdots,n\}$. 

If $H$ is separable in $G$, then $(G,H)$ is not a Grothendieck pair, by Lemma \ref{observation}. So we can suppose that $H$ is not separable in $G$. By \cite{Bur}, we know that some $H_j$ is not separable in $g_jG_{i_j}g_j^{-1}$. Up to conjugation and permuting indices, we can assume that $H_1=H\cap G_1$ is not separable in $G_1=\pi_1(M_1)$. 

Since geometric $3$-manifolds have LERF groups, $M_1$ has nontrivial torus decomposition and  does not support the Sol geometry, so $M_1$ satisfies Condition \ref{condition}. By applying Propositions \ref{nonseparable} and \ref{nonnormalizer} to $H_1<G_1=\pi_1(M_1)$, there is a finitely generated nonabelian subgroup $H_{1,0}< H_1$ such that the following hold:
\begin{enumerate}
\item Any finitely generated subgroup of $H_{1,0}$ is separable in $G_1$.
\item $[N_{G_1}(H_{1,0}):H_{1,0}]=\infty$.
\item $[N_{\widehat{H_1}}(\widehat{H_{1,0}}):\widehat{H_{1,0}}]<\infty$.
\end{enumerate}

Then we have the following commutative diagram 
\begin{diagram}
\widehat{H_{1,0}} & \rTo^{\widehat{j_1}} & \widehat{H_1}& \rTo ^{\widehat{i_1}}&\widehat{G_1}\\
& & \dTo^{\widehat{k_H}} & & \dTo^{\widehat{k_G}} \\
& & \widehat{H} & \rTo^{\widehat{i}} & \widehat{G}.
\end{diagram}
Here the horizontal homomorphisms are induced by inclusions $j_1:H_{1,0}\to H_1$, $i_1:H_1\to G_1$ and $i:H\to G$, as subgroups. The vertical homomorphisms are induced by inclusions $k_H:H_1\to H$ and $k_G:G_1\to G$, as free factors. 

Since $H_1$ and $G_1$ are free factors of $H$ and $G$ respectively, $\widehat{k_H}$ and $\widehat{k_G}$ are both injective. By condition (1), all finitely generated subgroup of $H_{1,0}$ are separable in $G_1$, so they are all separable in $H_1$ and $\widehat{j}_1:\widehat{H_{1,0}}\to \widehat{H_1}$ is injective. Then we have the following sequences of subgroups: $\widehat{H_{1,0}}< \widehat{H_1}< \widehat{H}$ and $\widehat{G_1}< \widehat{G}$. We will drop off the homomorphisms $\widehat{j_1},\widehat{k_H},\widehat{k_G}$, and $\widehat{i_1}$ is simply the restriction of $\widehat{i}$ on $\widehat{H_1}<\widehat{H}$.

Since $H_{1,0}$ is separable in $G_1$, we have $\widehat{i}(\widehat{H_{1,0}})\cap G_1=H_{1,0}$. Since $\widehat{G_1}$ is a profinite free factor of $\widehat{G}$, by Lemma \ref{normalizerfreeproduct}, we have $N_{\widehat{G}}(\widehat{i}(\widehat{H_{1,0}}))=N_{\widehat{G_1}}(\widehat{i}(\widehat{H_{1,0}}))$. So we get
\begin{align*}
&[N_{\widehat{G}}(\widehat{i}(\widehat{H_{1,0}})):\widehat{i}(\widehat{H_{1,0}})]=[N_{\widehat{G_1}}(\widehat{i}(\widehat{H_{1,0}})):\widehat{i}(\widehat{H_{1,0}})]\\
\geq \ & [N_{\widehat{G_1}}(\widehat{i}(\widehat{H_{1,0}}))\cap G_1:\widehat{i}(\widehat{H_{1,0}})\cap G_1]\geq [N_{G_1}(H_{1,0}):H_{1,0}]=\infty.
\end{align*}
Here the last equality holds by condition (2) above.

On the other hand, since $H_1$ is a free factor of $H$, by applying Lemma \ref{normalizerfreeproduct} again, we have  $N_{\widehat{H}}(\widehat{H_{1,0}})=N_{\widehat{H_1}}(\widehat{H_{1,0}})$. So we get 
$$[N_{\widehat{H}}(\widehat{H_{1,0}}):\widehat{H_{1,0}}]=[N_{\widehat{H_1}}(\widehat{H_{1,0}}):\widehat{H_{1,0}}]<\infty.$$ 
Here the last inequality holds by condition (3) above.

Suppose that $\widehat{i}:\widehat{H}\to \widehat{G}$ is an isomorphism. Then we have
$$\infty=[N_{\widehat{G}}(\widehat{i}(\widehat{H_{1,0}})):\widehat{i}(\widehat{H_{1,0}})]=[N_{\widehat{i}(\widehat{H})}(\widehat{i}(\widehat{H_{1,0}})):\widehat{i}(\widehat{H_{1,0}})]=[N_{\widehat{H}}(\widehat{H_{1,0}}):\widehat{H_{1,0}}]<\infty.$$
It is impossible, so the proof of Step II is done.

 

\bigskip

{\bf Step III.} General case. If $M$ is compact and orientable, then the Grothendieck rigidity follows from Step II.

If $M$ is orientable but not compact, then we take the Scott core $C\subset M$ (\cite{Sco1}). Here $C$ is a compact connected codimension-$0$ submanifold of $M$ such that the inclusion map induces an isomorphism on $\pi_1$. Then Step II implies that $\pi_1(C)$ is Grothendieck rigid, and so is $\pi_1(M)$.

If $M$ is nonorientable (either compact or non-compact), then we take the orientable double cover $M'\to M$. By the previous case, $\pi_1(M')$ is Grothendieck rigid. Then Lemma \ref{finiteindex} implies that $\pi_1(M)$ is Grothendieck rigid.

So the proof of Theorem \ref{main} is done.

\end{proof}

\end{document}